\newtheorem{lemma}{Lemma}
\newtheorem{theorem}{Theorem}
\newtheorem{assumption}{Assumption}
\newtheorem{definition}{Definition}
\DeclareMathOperator{\sign}{sgn}
\DeclareMathOperator{\argmin}{argmin}
\DeclareMathOperator{\VI}{VI}
\DeclareMathOperator{\SOL}{SOL}
\DeclareMathOperator{\rint}{rint}
\newenvironment{proof}{Proof.}{\hfill $\square$}
\begin{document}

\begin{frontmatter}

\title{Distributed Nash equilibrium seeking for aggregative games with coupled constraints \thanksref{footnoteinfo}} 

\thanks[footnoteinfo]{This paper was not presented at any IFAC
meeting. Corresponding author Y.~Hong. Tel. +86-10-82541824.
Fax +86-10-82541832.}

\author[CAS]{Shu Liang}\ead{sliang@amss.ac.cn},    
\author[TR]{Peng Yi}\ead{peng.yi@utoronto.ca},               
\author[CAS]{Yiguang
Hong}\ead{yghong@iss.ac.cn}  

\address[CAS]{Key Laboratory of Systems and Control, Academy of
Mathematics and Systems Science, Chinese Academy of Sciences,
Beijing, 100190, China}                   
\address[TR]{Department of Electrical \& Computer Engineering, University of Toronto, Canada}             

\begin{keyword}                           
Distributed algorithms; aggregative games; generalized
Nash equilibrum; projected dynamics; coupled constraints.            
\end{keyword}                             

\begin{abstract}                          
In this paper, we study a distributed continuous-time design for
aggregative games with coupled constraints in order to seek the generalized
Nash equilibrium by a group of agents via simple local
information exchange.   To solve the problem, we propose a distributed
algorithm based on projected dynamics and non-smooth tracking dynamics, even for the case when the interaction topology of the multi-agent network
is time-varying.  Moreover, we prove the
convergence of the non-smooth algorithm for the distributed game by taking advantage of its special structure and also combining the techniques of the variational inequality and Lyapunov function.
\end{abstract}

\end{frontmatter}

\section{Introduction}
The seek of generalized Nash equilibrium for non-cooperative games with coupled constraints has been widely investigated due to various applications in natural/social science and engineering (such as
telecommunication power allocation and cloud computation \cite{Pang2008Distributed,Ardagna2013Generalized}).  Significant theoretic and algorithmic achievement has been done, referring to \cite{Pavel2007Extension,Altman2009Constrained,Arslan2015On} and \cite{Facchinei2010Generalized}.

Distributed equilibrium seeking algorithms guide a group of players or agents to cooperatively achieve
the Nash equilibrium (NE), based on players' local information and information exchange
between their neighbors in a network.  The NE seeking may be viewed
as an extension of distributed optimization problems, which have been widely studied recently (see
\cite{Nedic2009Distributed,Shi2013reaching,Kia2015Distributed}), and on the other hand, distributed optimization problems can be handled with a game-theoretic approach \cite{Li2013Designing}.
In fact, in the study of complicated behaviors of strategic-interacted players in large-scale networks, it is quite natural to investigate game theory in a distributed way. For
example, distributed convergence to NE of zero-sum games over two
subnetworks was obtained in \cite{Lou2016Nash}. Moreover, a distributed fictitious play algorithm was proposed in \cite{Swenson2015Empirical}, while a gossip-based approach was employed for seeking an NE of noncooperative games in \cite{Salehisadaghiani2016Distributed}.

Aggregative games have become an important type of games since
the well-known Cournot model was proposed, and have recently been
studied in the literature, referring to
\cite{Jensen2010Aggregative,Cornes2012Fully}, for its broad application in public environmental models \cite{Cornes2016Aggregative}, congestion control of
communication networks \cite{Barrera2015Dynamic}, and demand
response management of power systems \cite{Ye2017Game}.  Usually,
linear aggregation functions and quadratic cost functions in such games were
considered, for example, in \cite{Parise2015Network,Paccagnan2016Distributed,Ye2017Game}. Also, 
a recent result was given for distributed discrete-time algorithms to seek the NE of an aggregative game with
time-varying topologies in \cite{Koshal2016Distributed}.

The objective of this paper is to develop a novel distributed
continuous-time algorithm for nonlinear aggregative games with
linear coupled constraints and time-varying topologies.     In recent
years, continuous-time algorithms for distributed optimization
become more and more popular \cite{Shi2013reaching,Kia2015Distributed,Yi2016Initialization},
partially because they may be easily implemented in continuous-time
or hybrid physical systems. However, ideas and approaches for
continuous-time design may not be the same as those for the discrete-time one.
Thanks to various well-developed continuous-time methods,
distributed continuous-time algorithms or protocols keep being
constructed, but the (convergence) conditions may be different from
those in discrete-time cases.

In our problem setup, every player tries to optimize its local cost function
by updating its local decision variable. The cost function depends on not only the local variable but also a nonlinear aggregation. Moreover, feasible decision variables of players are coupled by linear constraints. Existing distributed algorithms for aggregative games \cite{Koshal2016Distributed,Ye2017Game} cannot solve our problems since they did not consider coupled constraints. The contribution of this paper can be summarized as follows:
\begin{itemize}
\item The aggregative game model in this paper generalizes the previous ones in \cite{Paccagnan2016Distributed,Ye2017Game} by
allowing nonlinear aggregation term and non-quadratic cost
functions, and also those in \cite{Koshal2016Distributed} by considering coupled constraints. In addition, the considered game can be non-potential.
\item Inspired from distributed average tracking dynamics and
projected primal-dual dynamics, we take advantage of continuous-time techniques to solve the distributed problem. With the new idea, our algorithm is described as a {\em non-smooth} multi-agent system with two
interconnected dynamics: a projected gradient one for the equilibrium seeking, and a consensus one for the synchronization of the
aggregation and the dual variables. In addition, our algorithm need not solve the best response subproblems, different from those in \cite{Parise2015Network}, and can keep private some information about the cost functions, local decisions, and constraint coefficients.
\item We provide a method to prove the correctness and convergence of the continuous-time algorithm by combining the techniques from variational inequality theory and Lyapunov stability theory.
\end{itemize}

{\em Notations}: Denote $\mathbb{R}^n$ as the
$n$-dimensional real vector space; denote $\mathbf{1}_n =  (1,...,1)^{T} \in \mathbb{R}^n$, and $\mathbf{0}_n = (0,...,0)^{T} \in \mathbb{R}^n$. Denote $col(x_1,...,x_n) = (x_1^{T},...,x_n^{T})^{T}$ as the column vector
stacked with column vectors $x_1,...,x_n$, $\|\cdot\|$ as the Euclidean norm, and $I_n\in \mathbb{R}^{n\times n}$ as the identity matrix.   Denote $\nabla f$ as the gradient vector of a function $f$ and $\mathcal{J}F$ as the Jacobian matrix of a map $F$. 
Let $C_1\pm C_2 = \{z_1 \pm z_2\,|\,z_1\in C_1, z_2\in C_2\}$ be the Minkowski sum/minus of sets $C_1$ and $C_2$, and $\rint(C)$ be the relative interior of a convex set $C$ \cite[page 25 and page 64]{Rockafellar1998Variational}.

\section{Preliminaries}
In this section, we give some preliminary knowledge related to convex analysis, variational inequality, and graph theory.

A set $C \subseteq \mathbb{R}^n$ is {\em convex} if $\lambda z_1
+(1-\lambda)z_2\in C$ for any $z_1, z_2 \in C$ and $0\leq \lambda \leq 1$.
For a closed convex set $C$, the {\em projection} map $P_{C}:\mathbb{R}^n \to C$ is defined as
\begin{equation*}
P_{C}(x) \triangleq \argmin_{y\in C} \|x-y\|.
\end{equation*}
The following two basic properties hold:
\begin{align}
\label{eq:pro_projection}
&(x-P_{C}(x))^{T}(P_{C}(x)-y) \geq 0, \quad \forall \, y\in C\text{,}\\
\label{eq:pro_Lip}
&\|P_{C}(x)-P_{C}(y)\|\leq \|x-y\|, \quad \forall \, x,y\in \mathbb{R}^n\text{.}
\end{align}

For $x\in C$, the {\em tangent cone} to $C$ at $x$ is
\begin{multline*}
\mathcal{T}_C(x) \triangleq \{\lim_{k\to\infty}\frac{x_k-x}{t_k}\,|\,x_k\in C, t_k>0, \\
\text{ and }x_k \to x, t_k\to 0\}\text{.}
\end{multline*}
and the {\em normal cone} to $C$ at $x$ is
\begin{equation*}
\mathcal{N}_C(x) \triangleq \{v\in \mathbb{R}^n\,|\,v^T(y -x) \leq 0, \text{ for all } y\in C\}.
\end{equation*}

\begin{lemma}\label{lemma:tangent_cone}
{\em \cite[Theorem 6.42]{Rockafellar1998Variational}}
Let $C_1$ and $C_2$ be two closed convex subsets of $\mathbb{R}^n$. If $0\in \rint(C_1-C_2)$, then
\begin{equation*}
\mathcal{T}_{C_1\cap C_2}(x) = \mathcal{T}_{C_1}(x)\cap \mathcal{T}_{C_2}(x), \,\forall \, x\in C_1\cap C_2.
\end{equation*}
\end{lemma}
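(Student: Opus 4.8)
The plan is to prove the two inclusions separately, observing that only the non-trivial one requires the relative-interior hypothesis. The inclusion $\mathcal{T}_{C_1\cap C_2}(x) \subseteq \mathcal{T}_{C_1}(x)\cap \mathcal{T}_{C_2}(x)$ holds unconditionally: if $v = \lim_{k\to\infty}(x_k-x)/t_k$ with $x_k\in C_1\cap C_2$, $x_k\to x$, $t_k>0$, $t_k\to 0$, then since $x_k\in C_i$ the very same sequence witnesses $v\in \mathcal{T}_{C_i}(x)$ for $i=1,2$, so $v$ lies in both tangent cones. This direction uses neither convexity nor the constraint qualification, and is pure bookkeeping with the definition.

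For the reverse inclusion I would pass to the polar (dual) description, using that for a closed convex set $C$ and $x\in C$ the tangent and normal cones are mutually polar, $\mathcal{N}_C(x) = \mathcal{T}_C(x)^{\circ}$ and $\mathcal{T}_C(x) = \mathcal{N}_C(x)^{\circ}$, where $K^{\circ} = \{w : w^T v\le 0,\ \forall\, v\in K\}$. Combined with the elementary cone identity $(K_1+K_2)^{\circ} = K_1^{\circ}\cap K_2^{\circ}$, the desired equality would reduce to the normal-cone sum rule $\mathcal{N}_{C_1\cap C_2}(x) = \mathcal{N}_{C_1}(x)+\mathcal{N}_{C_2}(x)$ by taking polars throughout:
\begin{equation*}
\mathcal{T}_{C_1\cap C_2}(x) = \mathcal{N}_{C_1\cap C_2}(x)^{\circ} = (\mathcal{N}_{C_1}(x)+\mathcal{N}_{C_2}(x))^{\circ} = \mathcal{T}_{C_1}(x)\cap \mathcal{T}_{C_2}(x).
\end{equation*}

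The crux is therefore the normal-cone sum rule, and this is exactly where the hypothesis $0\in \rint(C_1-C_2)$ enters. First I would rewrite it via the convex-set identity $\rint(C_1-C_2) = \rint C_1 - \rint C_2$, so the condition becomes equivalent to $\rint C_1 \cap \rint C_2 \neq \emptyset$, a Slater-type qualification. The inclusion $\mathcal{N}_{C_1}(x)+\mathcal{N}_{C_2}(x)\subseteq \mathcal{N}_{C_1\cap C_2}(x)$ is immediate from the definition of the normal cone. For the reverse, given $w\in \mathcal{N}_{C_1\cap C_2}(x)$ I would run a separation argument on convex sets built from $C_1$ and $C_2$ (equivalently, apply Fenchel duality to their indicator functions); the relative-interior overlap certifies the absence of a duality gap and forces the separating functional to decompose as $w = w_1+w_2$ with $w_i\in \mathcal{N}_{C_i}(x)$.

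I expect the main obstacle to be precisely this decomposition step. Without a constraint qualification the sum $\mathcal{N}_{C_1}(x)+\mathcal{N}_{C_2}(x)$ need not be closed and can be strictly smaller than $\mathcal{N}_{C_1\cap C_2}(x)$, so the entire weight of the lemma rests on showing that $\rint C_1 \cap \rint C_2 \neq \emptyset$ rules out a separating functional supported on a lower-dimensional gap between the two sets. Once the sum rule is established, the remainder is the routine polarity manipulation displayed above.
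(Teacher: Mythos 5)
The paper does not prove this lemma at all---it is imported verbatim as \cite[Theorem 6.42]{Rockafellar1998Variational}---so there is no in-paper argument to compare your proposal against. Judged on its own terms, your outline is correct and is essentially the standard duality proof of that textbook theorem. The easy inclusion $\mathcal{T}_{C_1\cap C_2}(x)\subseteq \mathcal{T}_{C_1}(x)\cap\mathcal{T}_{C_2}(x)$ is indeed unconditional and purely definitional. Your reduction of the hard inclusion to the normal-cone sum rule via polarity is sound: for a closed convex set the tangent cone is a closed convex cone polar to the normal cone, the identity $(K_1+K_2)^{\circ}=K_1^{\circ}\cap K_2^{\circ}$ holds for cones containing the origin, and the possible non-closedness of $\mathcal{N}_{C_1}(x)+\mathcal{N}_{C_2}(x)$ is harmless on the polar side since a set and its closure have the same polar. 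Your translation of the hypothesis, $0\in\rint(C_1-C_2)\Leftrightarrow \rint C_1\cap\rint C_2\neq\emptyset$ via $\rint(C_1-C_2)=\rint C_1-\rint C_2$, is also the right move. The one place where your write-up stops short of a proof is exactly where you say it does: the decomposition $w=w_1+w_2$ of a normal to the intersection, which is the subdifferential sum rule for indicator functions (Rockafellar, \emph{Convex Analysis}, Cor.~23.8.1) and itself rests on a separation/Fenchel-duality argument you only gesture at. Since that is again a cited-level classical theorem, and the lemma in the paper is likewise only cited, your proposal is at least as complete as what the paper offers; if you wanted a self-contained argument you would need to carry out that separation step explicitly.
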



A function $f:\mathbb{R}^{n}\to \mathbb{R}$ is {\em convex} if
$f(\lambda z_1+(1-\lambda)z_2)\leq \lambda f(z_1) +
(1-\lambda)f(z_2)$ for any $z_1, z_2 \in C$ and $0\leq \lambda \leq
1$. A map $F:\mathbb{R}^n\rightarrow \mathbb{R}^n$ is said to be
{\em monotone (strictly monotone)}  on a set $\Omega$ if
$(x-y)^{T}(F(x)-F(y))\geq 0\,(> 0)$ for all $x,y\in\Omega$ and
$x\neq y$. A differentiable map $F$ is monotone if and only if the Jacobian matrix $\mathcal{J} F(x)$ (not necessarily symmetric) is positive semidefinite for each $x$ \cite[Theorem 12.3]{Rockafellar1998Variational}.

Given a subset $\Omega \subseteq \mathbb{R}^n$ and a map $F: \Omega \to \mathbb{R}^n$, the {\em variational inequality}, denoted by $\VI(\Omega,F)$,
is to find a vector $x\in \Omega$ such that
\begin{equation*}
(y-x)^TF(x) \geq 0,\quad \forall\,y\in \Omega,
\end{equation*}
and the set of solutions to this problem is denoted by
$\SOL(\Omega,F)$ \cite{Facchinei2003Finite}. When $\Omega$ is closed and convex, the
solution of $\VI(\Omega,F)$ can be equivalently reformulated via
projection as follows:
\begin{equation}\label{eq:SOL2Pro}
x\in \SOL(\Omega,F) \Leftrightarrow x = P_\Omega(x-F(x))\text{.}
\end{equation}

\begin{lemma}\label{lemma:existence}
{\em \cite[Corollary 2.2.5, and Theorem
2.2.3]{Facchinei2003Finite}}
Consider $\VI(\Omega,F)$, where the set $\Omega \subset
\mathbb{R}^n$ is convex and the map $F:\Omega\to \mathbb{R}^n$ is
continuous.  The following two statements hold:
\begin{enumerate}[1)]
\itemsep = 0pt \parskip = 0pt
\item if $\Omega$ is compact, then $\SOL(\Omega,F)$ is nonempty and compact;
\item if $\Omega$ is closed and $F(x)$ is strictly monotone, then $\VI(\Omega,F)$ has at most one solution.
\end{enumerate}
\end{lemma}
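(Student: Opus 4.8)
The plan is to treat the two statements separately, with the projection reformulation \eqref{eq:SOL2Pro} as the common starting point. Throughout, since $\Omega$ is closed and convex the map $\Phi(x) \triangleq P_\Omega(x - F(x))$ is well defined, and by the nonexpansiveness \eqref{eq:pro_Lip} the projection $P_\Omega$ is continuous; composing it with the continuous map $x \mapsto x - F(x)$ shows that $\Phi$ is continuous on $\Omega$. By \eqref{eq:SOL2Pro}, $\SOL(\Omega,F)$ is then exactly the fixed-point set of $\Phi$, which is the structural fact I would exploit for statement~1.

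For statement~1, I would first establish nonemptiness. When $\Omega$ is compact and convex, $\Phi$ maps the compact convex set $\Omega$ continuously into itself, since the range of $P_\Omega$ lies in $\Omega$. Brouwer's fixed-point theorem then yields a point $x^\star \in \Omega$ with $x^\star = \Phi(x^\star)$, and by \eqref{eq:SOL2Pro} this $x^\star$ solves $\VI(\Omega,F)$, so $\SOL(\Omega,F) \neq \emptyset$. For compactness, I would write the solution set as $\{x \in \Omega : x - \Phi(x) = 0\}$, the preimage of $\{0\}$ under the continuous map $x \mapsto x - \Phi(x)$; it is therefore closed, and as a closed subset of the compact set $\Omega$ it is compact. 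Equivalently, exhibiting it as $\bigcap_{y\in\Omega}\{x \in \Omega : (y-x)^TF(x)\ge 0\}$ displays it as an intersection of closed sets.

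For statement~2, strict monotonicity alone gives uniqueness by a short contradiction argument, needing no compactness. Suppose $x_1, x_2 \in \SOL(\Omega,F)$ with $x_1 \neq x_2$. Testing the defining inequality for $x_1$ with $y = x_2$, and that for $x_2$ with $y = x_1$, gives $(x_2 - x_1)^T F(x_1) \geq 0$ and $(x_1 - x_2)^T F(x_2) \geq 0$; adding these yields $(x_1 - x_2)^T (F(x_1) - F(x_2)) \leq 0$, contradicting the strict monotonicity inequality $(x_1 - x_2)^T(F(x_1) - F(x_2)) > 0$. Hence the solution, if it exists, is unique.

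The main obstacle is the existence half of statement~1: the only nontrivial ingredient is Brouwer's fixed-point theorem, so the real work lies in verifying its hypotheses, namely that $\Phi$ is continuous (which rests on the nonexpansiveness \eqref{eq:pro_Lip}) and is a self-map of a compact convex domain. Once the fixed-point reformulation is set up, the compactness of the solution set and the entire uniqueness claim are routine.
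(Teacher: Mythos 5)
Your proof is correct: the paper gives no proof of this lemma (it is quoted directly from Facchinei--Pang), and your argument --- Brouwer's fixed-point theorem applied to the continuous self-map $x \mapsto P_\Omega(x-F(x))$ via the reformulation \eqref{eq:SOL2Pro} for existence and compactness, plus the standard two-inequality contradiction with strict monotonicity for uniqueness --- is exactly the textbook argument behind the cited results.
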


The following lemma about a regularized gap function is important for our results.

\begin{lemma}\label{lemma:projection_differential}
{\em \cite{Fukushima1992Equivalent}}
Let $F: \mathbb{R}^n\to \mathbb{R}^n$ be a differentiable map and $H(x) = P_\Omega(x-F(x))$. Define $g:\mathbb{R}^n\to \mathbb{R}$ as
\begin{equation*}
g(x) = (x-H(x))^{T}F(x)-\frac{1}{2}\|x-H(x)\|^2\text{.}
\end{equation*}
Then $g(x)\geq 0$ is differentiable and its gradient is
\begin{equation*}
\nabla g(x) = F(x) + (\mathcal{J} F(x) - I_n)(x-H(x))\text{.}
\end{equation*}
\end{lemma}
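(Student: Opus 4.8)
The plan is to reconstruct the two assertions of the lemma---nonnegativity and the gradient formula---from the fact that $g$ is a \emph{regularized gap function}, i.e. a pointwise maximum of a smooth integrand that is strongly concave in an auxiliary variable and whose unique maximizer is exactly $H(x)$. First I would introduce
\[
\phi(x,y) \triangleq (x-y)^{T} F(x) - \tfrac{1}{2}\|x-y\|^2, \qquad y\in\Omega,
\]
and complete the square in $y$ to obtain $\phi(x,y) = \tfrac12\|F(x)\|^2 - \tfrac12\|(x-F(x))-y\|^2$. Since $\phi(x,\cdot)$ is strongly concave, maximizing it over the closed convex set $\Omega$ is equivalent to minimizing $\|(x-F(x))-y\|^2$, whose unique solution is $y = P_\Omega(x-F(x)) = H(x)$. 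Hence $g(x)=\max_{y\in\Omega}\phi(x,y)=\phi(x,H(x))$, which is the representation driving everything below.

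For nonnegativity, I would apply the projection inequality \eqref{eq:pro_projection} to the point $x-F(x)$, whose projection is $H(x)$: for every $y\in\Omega$,
\[
\big((x-F(x))-H(x)\big)^{T}\big(H(x)-y\big)\ge 0 .
\]
Writing $r(x)=x-H(x)$ and taking $y=x$ (legitimate for the feasible points $x\in\Omega$, which is the case of interest) yields $r(x)^{T} F(x)\ge \|r(x)\|^2$, and therefore $g(x)=r(x)^{T}F(x)-\tfrac12\|r(x)\|^2\ge \tfrac12\|r(x)\|^2\ge 0$. This also exhibits $g$ as an error bound that vanishes exactly when $x=H(x)$, i.e. at solutions of $\VI(\Omega,F)$ by \eqref{eq:SOL2Pro}.

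The differentiability claim is the real work, and the main obstacle is that $H=P_\Omega\circ(\mathrm{id}-F)$ is in general only Lipschitz, not differentiable, so $g$ cannot be handled by a naive chain rule. I would instead use the max-representation to obtain a two-sided (Danskin-type) estimate at a fixed $\bar x$. Set the candidate gradient $v=\nabla_x\phi(\bar x,H(\bar x))$, where differentiating the explicit $x$-dependence of $\phi$ with $y$ held fixed gives $\nabla_x\phi(x,y)=F(x)+(\mathcal{J}F(x)-I_n)(x-y)$, so that $v=F(\bar x)+(\mathcal{J}F(\bar x)-I_n)(\bar x-H(\bar x))$ is precisely the asserted expression. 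The lower bound comes from $g(x)\ge\phi(x,H(\bar x))$, a fixed competitor in the max whose first-order expansion in $x$ contributes $v^{T}(x-\bar x)+o(\|x-\bar x\|)$. The upper bound comes from $g(\bar x)\ge\phi(\bar x,H(x))$, giving $g(x)-g(\bar x)\le \phi(x,H(x))-\phi(\bar x,H(x))$; here $H(x)$ moves with $x$, but by Lipschitz continuity of $H$ it stays near $H(\bar x)$, and since $\nabla_x\phi(\cdot,y)$ is locally uniformly continuous and affine in $y$, this increment is again $v^{T}(x-\bar x)+o(\|x-\bar x\|)$.

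Combining the two bounds gives $g(x)-g(\bar x)=v^{T}(x-\bar x)+o(\|x-\bar x\|)$, which is exactly differentiability of $g$ at $\bar x$ with $\nabla g(\bar x)=v$; continuity of $v$ in $\bar x$ (continuity of $F$, $\mathcal{J}F$, and $H$) then upgrades this to $C^1$. The delicate point throughout is the upper bound, where the unknown map $H$ appears inside $\phi$: it is the strong concavity of $\phi(x,\cdot)$---forcing the maximizer to be unique and Lipschitz---that makes the perturbation of the maximizer contribute only at second order and lets the nonsmoothness of $H$ wash out. I expect this sandwich estimate to be the crux, with the algebraic identities for $\phi$ and its gradient being routine.
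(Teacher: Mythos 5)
Your proposal is correct, and it essentially reconstructs the argument of the cited reference \cite{Fukushima1992Equivalent} (the paper itself offers no proof, only the citation): writing $g(x)=\max_{y\in\Omega}\phi(x,y)$ with the strongly concave $\phi(x,\cdot)$ maximized uniquely at $H(x)$, and then extracting the gradient by a Danskin-type two-sided estimate, is exactly the standard route, with your sandwich argument correctly identifying that Lipschitz continuity of $H$ makes the motion of the maximizer contribute only at order $o(\|x-\bar x\|)$ in the upper bound. One point worth making explicit: the nonnegativity $g(x)\ge 0$ genuinely holds only for $x\in\Omega$ (e.g.\ $\Omega=\{0\}$, $F\equiv 0$ gives $g(x)=-\tfrac12\|x\|^2$), so your restriction to feasible $x$ is not a convenience but a necessity; this is a looseness in the lemma's statement rather than in your proof, and it is harmless in the paper since the Lyapunov function \eqref{eq:Lyapunov} is only ever evaluated along trajectories with $\theta\in\Theta$. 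A second minor caveat is that the clean gradient formula and your uniform-continuity step implicitly use continuity of $\mathcal{J}F$, i.e.\ $F\in C^1$ rather than merely differentiable, which again matches the original reference and is guaranteed here by the smoothness item of Assumption~\ref{assum:basic}.
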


Furthermore, it is known that the information exchange among agents
can be described by a graph. A graph with node set $\mathcal{V}$ and edge set $\mathcal{E}$ is written as $\mathcal{G}=(\mathcal{V},\,\mathcal{E})$ \cite{Godsil01}. If agent $i\in \mathcal{V}$ can receive information
from agent $j\in \mathcal{V}$, then $(j,\,i) \in \mathcal{E}$ and agent $j$ belongs
to agent $i$'s neighbor set $\mathcal{N}_i= \{ j \,|\,
(j,\,i) \in \mathcal{E}\}$. $\mathcal{G}$ is said to be {\em undirected}
if $(i,\,j)\in \mathcal{E} \Leftrightarrow (j,\,i)\in \mathcal{E}$, and
$\mathcal{G}$ is said to be {\em connected} if any two nodes in $\mathcal{V}$ are connected by a path (a sequence of distinct nodes in which any consecutive pair of nodes share an edge).



\section{Problem Formulation}\label{nea}
Consider an $N$-player aggregative game with coupled constraints as follows. For $i\in \mathcal{V} \triangleq \{1,...,N\}$, the $i$th player aims to minimize its cost function $J_i(x_i,x_{-i}): \Omega \to \mathbb{R}$ by choosing the local decision variable $x_i$ from a local strategy set $\Omega_i \subset \mathbb{R}^{n_i}$, where $x_{-i} \triangleq
col(x_1,...,x_{i-1},x_{i+1},...,x_{N})$, $\Omega \triangleq
\Omega_1 \times \cdots \times \Omega_N \subset \mathbb{R}^n$ and $n = \sum_{i\in\mathcal{V}} n_i$. The {\em strategy profile} of this game is $x \triangleq col(x_1,...,x_N)\in
\Omega$. The {\em aggregation} map $\sigma: \mathbb{R}^{n}\to \mathbb{R}^m$, to specify the cost function as $J_i(x_i,x_{-i}) = \vartheta_i(x_i,\sigma(x))$ with a function $\vartheta_i: \mathbb{R}^{n_i+m} \to \mathbb{R}$, is defined as
\begin{equation}\label{eq:sigma}
\sigma(x) \triangleq \frac{1}{N}\sum_{i=1}^N \varphi_i(x_i)\text{,}
\end{equation}
where $\varphi_i : \mathbb{R}^{n_i} \to \mathbb{R}^m$ is a (nonlinear) map for the local contribution to the aggregation. In addition, the {\em feasible strategy set} of this game is $\mathcal{K} = \Omega \cap \mathcal{X}$, where $\mathcal{X}$ is a set for {\em linear coupled constraints}, defined as
\begin{multline}\label{eq:resource_allocation}
\mathcal{X} \triangleq \{x\in \mathbb{R}^{n}\,|\,\sum_{i\in\mathcal{V}}A_ix_i = \sum_{i\in\mathcal{V}}b_i\} \\
= \{x\in \mathbb{R}^{n}\,|\,Ax-b = \mathbf{0}_l\} \text{,}
\end{multline}
for some $A_i \in \mathbb{R}^{l\times n_i}, \, b_i\in \mathbb{R}^l$, $A = [A_1,...,A_N]$, and $b = \sum_{i\in\mathcal{V}}b_i$.

For such {\em games with coupled constraints}, the following concept of generalized Nash equilibrium is considered.

\begin{definition}\label{defn:GNE}
A strategy profile $x^*$ is said to be a {\em
generalized Nash equilibrium} (GNE) of the game if
\begin{equation}\label{eq:K}
J_i(x_i^*,x_{-i}^*) \leq J_i(y,x_{-i}^*), \, \forall \, y : (y,x_{-i}^*)\in \mathcal{K}, i\in \mathcal{V}.
\end{equation}
\end{definition}
Condition \eqref{eq:K} means that all players simultaneously take their own
best (feasible) responses at $x^*$, where no player can further
decrease its cost function by changing its decision variable unilaterally.

Moreover, a strategy profile is said to be a {\em variational
equilibrium}, or variational GNE, if it is a solution of
$\VI(\mathcal{K},F)$, where the map $F(x):\mathbb{R}^n\to \mathbb{R}^n$ is
defined as
\begin{equation}\label{eq:F}
F(x) \triangleq col\{\nabla_{x_1} J_{1}(\cdot,x_{-1}),..., \nabla_{x_N} J_{N}(\cdot,x_{-N})\}\text{.}
\end{equation}
The variational GNEs are well-defined due to the following result.

\begin{lemma}\label{lemma:variational_GNE}
{\em \cite[Theorem 3.9]{Facchinei2010Generalized}}
If $\mathcal{K}$ is convex, every solution of the $\VI(\mathcal{K},F)$ is also a GNE.
\end{lemma}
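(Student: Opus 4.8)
The plan is to exploit the block structure of the pseudo-gradient $F$ in \eqref{eq:F} together with the fact that the feasibility test in Definition~\ref{defn:GNE} perturbs only one player's block at a time. First I would take any $x^*\in\SOL(\mathcal{K},F)$, so that by definition of the variational inequality
\begin{equation*}
(y-x^*)^TF(x^*)\geq 0,\quad \forall\,y\in\mathcal{K}.
\end{equation*}
Then I fix a player $i\in\mathcal{V}$ and an arbitrary deviation $y_i$ satisfying $(y_i,x_{-i}^*)\in\mathcal{K}$; these $y_i$ are exactly the admissible competitors appearing in \eqref{eq:K}. The crucial move is to substitute the feasible test point $y=(y_i,x_{-i}^*)$ into the inequality above.

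Because this $y$ coincides with $x^*$ in every block except the $i$-th, all but one of the summands in $(y-x^*)^TF(x^*)=\sum_{j\in\mathcal{V}}(y_j-x_j^*)^T\nabla_{x_j}J_j(x^*)$ vanish, leaving the single-player first-order condition
\begin{equation*}
(y_i-x_i^*)^T\nabla_{x_i}J_i(x_i^*,x_{-i}^*)\geq 0.
\end{equation*}
This is precisely the variational inequality characterizing a minimizer of player $i$'s problem $\min J_i(\cdot,x_{-i}^*)$ over the slice $\{y_i:(y_i,x_{-i}^*)\in\mathcal{K}\}$, which is convex because $\mathcal{K}$ is convex.

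To convert this stationarity statement into the value inequality \eqref{eq:K}, I would invoke convexity of $J_i(\cdot,x_{-i}^*)$ in its own argument (a standing assumption of this jointly-convex setting, as in the cited reference) and apply the gradient inequality:
\begin{equation*}
J_i(y_i,x_{-i}^*)-J_i(x_i^*,x_{-i}^*)\geq (y_i-x_i^*)^T\nabla_{x_i}J_i(x_i^*,x_{-i}^*)\geq 0.
\end{equation*}
Since $i$ and the admissible deviation $y_i$ were arbitrary, this establishes \eqref{eq:K} for every player, so $x^*$ is a GNE. The one delicate point, which I expect to be the real substance of the argument, is that the implication genuinely rests on the player-wise convexity of the cost functions: the variational inequality by itself only delivers the first-order condition, and convexity is what upgrades it to global optimality of each player's response. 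Convexity of $\mathcal{K}$ enters only to guarantee that the per-player feasible slices over which this gradient inequality is applied are themselves convex, so the reduction to each player's convex subproblem is legitimate.
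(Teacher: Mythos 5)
Your proof is correct and is essentially the standard argument behind the cited result (\cite[Theorem 3.9]{Facchinei2010Generalized}); the paper itself gives no proof but simply invokes that reference, whose proof proceeds exactly as you describe: test the variational inequality with the single-block deviation $y=(y_i,x_{-i}^*)\in\mathcal{K}$ and upgrade the resulting first-order condition via player-wise convexity. The convexity of $J_i(\cdot,x_{-i}^*)$ that you correctly flag as the load-bearing hypothesis is not stated in the lemma but is available in this paper's setting, since the monotonicity and smoothness items of Assumption~\ref{assum:basic} force $\mathcal{J}F(x)$ to be positive semidefinite, and its diagonal blocks $\nabla^2_{x_ix_i}J_i$ are then positive semidefinite as well.
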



The following assumption and theorem are associated with the game and the variational GNEs.
\begin{assumption}\label{assum:basic}
~
\begin{itemize}
\itemsep = 0pt \parskip = 0pt
\itemsep = 0pt \parskip = 0pt
\item {\em Smoothness:} $\forall\,i\in \mathcal{V}$, the cost function $J_{i}(x_i,x_{-i})$ is twice continuously differentiable.
\item {\em Monotonicity:} $F(x)$ in \eqref{eq:F} is strictly monotone.
\item {\em Feasibility:} $\Omega$ is compact and convex.
\item {\em Constraint qualification:} $0\in \rint(\Omega-\mathcal{X})$.
\end{itemize}
\end{assumption}

\begin{theorem}\label{thm:G2VI}
Under Assumption \ref{assum:basic}, the considered game admits a unique variational GNE.
\end{theorem}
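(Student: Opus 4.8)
The variational GNE is, by definition, a solution of $\VI(\mathcal{K},F)$ with $\mathcal{K}=\Omega\cap\mathcal{X}$ and $F$ as in \eqref{eq:F}. The plan is therefore to establish existence and uniqueness of such a solution by checking the hypotheses of Lemma~\ref{lemma:existence} for this particular $\mathcal{K}$ and $F$, rather than reasoning about the GNE directly.

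First I would pin down the geometry of $\mathcal{K}$. The coupled-constraint set $\mathcal{X}=\{x\,|\,Ax-b=\mathbf{0}_l\}$ is an affine subspace, hence closed and convex, while the \emph{Feasibility} assumption gives that $\Omega$ is compact and convex. Since the intersection of a compact set with a closed set is compact, and the intersection of convex sets is convex, $\mathcal{K}=\Omega\cap\mathcal{X}$ is compact and convex. To secure that $\mathcal{K}$ is nonempty I would invoke the \emph{Constraint qualification}: because the relative interior of a set is contained in the set itself, $0\in\rint(\Omega-\mathcal{X})\subseteq\Omega-\mathcal{X}$, so there exist $z_1\in\Omega$ and $z_2\in\mathcal{X}$ with $z_1=z_2$; this common point lies in $\Omega\cap\mathcal{X}$, whence $\mathcal{K}\neq\emptyset$.

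Next I would verify the properties of $F$. The \emph{Smoothness} assumption makes each $J_i$ twice continuously differentiable, so the stacked gradient map $F$ in \eqref{eq:F} is continuously differentiable and in particular continuous; the \emph{Monotonicity} assumption supplies strict monotonicity of $F$. With these facts the conclusion is immediate from Lemma~\ref{lemma:existence}: part~1), applied to the nonempty compact convex set $\mathcal{K}$ and the continuous $F$, gives that $\SOL(\mathcal{K},F)$ is nonempty, while part~2), applied to the closed $\mathcal{K}$ and the strictly monotone $F$, gives at most one solution. Combining the two yields a unique element of $\SOL(\mathcal{K},F)$, i.e.\ a unique variational GNE.

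Since the argument reduces to a direct check of the premises of Lemma~\ref{lemma:existence}, I do not expect a genuine obstacle. The only delicate point is the nonemptiness of $\mathcal{K}$, which is exactly what the \emph{Constraint qualification} guarantees; without it the feasible set could be empty and the statement would be vacuous. I note in passing that the stronger content of $0\in\rint(\Omega-\mathcal{X})$, namely the tangent-cone decomposition $\mathcal{T}_{\mathcal{K}}(x)=\mathcal{T}_{\Omega}(x)\cap\mathcal{T}_{\mathcal{X}}(x)$ of Lemma~\ref{lemma:tangent_cone}, is not needed for this theorem but will be essential later for the convergence analysis of the projected dynamics.
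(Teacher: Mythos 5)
Your proposal is correct and follows essentially the same route as the paper: both apply Lemma~\ref{lemma:existence}, using compactness/convexity of the feasible set and continuity of $F$ for existence, and strict monotonicity for uniqueness. Your explicit check that $\mathcal{K}\neq\emptyset$ via $0\in\rint(\Omega-\mathcal{X})\subseteq\Omega-\mathcal{X}$ is a detail the paper's one-line proof leaves implicit, and it is a worthwhile addition since existence would otherwise be vacuous.
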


\begin{proof}
According to Lemma \ref{lemma:existence}, the smoothness and feasibility in Assumption
\ref{assum:basic} guarantee the existence of a variational GNE, and the monotonicity in Assumption
\ref{assum:basic} guarantees the uniqueness.
\end{proof}

The constraint qualification in Assumption \ref{assum:basic} is quite mild and can be easily verified. In fact, it suffices
to check whether the set $\mathcal{K}$ has nonempty relative interior, {\em
i.e.}, $\rint(\mathcal{K}) \neq \emptyset$, because in this case,
\begin{equation*}
0\in \rint(\mathcal{K}-\mathcal{K})\subseteq \rint(\Omega-\mathcal{X})\text{.}
\end{equation*}

In the distributed design for our aggregative game, the communication topology for each player to exchange information is assumed as follows.

\begin{assumption}\label{assum:graph}
The (time-varying) graph $\mathcal{G}(t)$ is undirected and connected.
\end{assumption}

Then we formulate our problem.

{\bf Problem 1:} Design a distributed algorithm to seek the
variational GNE for the considered aggregative game with coupled constraints.

Here are some remarks about our formulation.
\begin{itemize}
\item Our formulation is quite similar to that in \cite{Koshal2016Distributed}, but we further consider linear coupled constraints and study its distributed continuous-time GNE seeking algorithm.
\item Players may not measure the values of the aggregation directly, in contrast to \cite{Parise2015Network}.
\item Our aggregation function can
be nonlinear and the cost functions can be non-quadratic, which are different from \cite{Paccagnan2016Distributed} and \cite{Ye2017Game}. In addition, $\mathcal{J}F(x)$ can be asymmetric\begin{footnote}{A game is a potential game if there is a function $P(x)$ such that $\nabla P(x) = F(x)$ \cite{Ye2017Game}. This equation holds if and only if the Jacobian matrix $\mathcal{J}F(x)$ is symmetric \cite[Theorem 1.3.1]{Facchinei2003Finite}.}\end{footnote}, which was also discussed in \cite{Paccagnan2016Distributed}.
\end{itemize}

\section{Main Results}

In this section, we first propose our distributed algorithm and then analyze its correctness and convergence.

\subsection{Distributed Algorithm}
Let $\alpha, \beta, \gamma>0$ be some constants satisfying $\alpha>(N-1)\bar{f}_1$ and $\beta > \gamma(N-1)\bar{f}_2$, where
\begin{align}
\label{eq:f1}
\bar{f}_1 & = \sup_{i\in \mathcal{V}} \big(\sup_{x_i\in \Omega_i}\|\nabla \varphi_i(x_i)\| \sup_{y,z\in\Omega}\|y-z\|\big)\text{,}\\
\label{eq:f2}
\bar{f}_2 & = \sup_{i\in \mathcal{V}} \big(\sup_{x_i\in \Omega_i}\|A_ix_i-b_i\|\big)\text{.}
\end{align}
For $i\in\mathcal{V}$, define the map $G_i(\cdot): \mathbb{R}^{n_i+m}\to \mathbb{R}^{n_i}$ as
\begin{align}\label{eq:Gi}
G_i(x_i,y_i) & \triangleq \nabla_{x_i} J_{i}(\cdot,x_{-i})|_{\sigma(x) = y_i}\\
\nonumber
& = (\nabla_{x_i} \vartheta_i(\cdot,\sigma) + \frac{1}{N} \nabla_{\sigma} \vartheta_i(x_i,\cdot)^T \nabla \varphi_i)|_{\sigma = y_i}\text{.}
\end{align}

Then the distributed continuous-time algorithm to solve Problem 1 is designed as follows:
\begin{equation}\label{eq:algorithm2}
\left\{\begin{aligned}
\dot{x}_i & = P_{\Omega_i}(x_i-G_i(x_i,\eta_i) - \frac{\gamma}{N}A_i^T\lambda_i)-x_i\\
\dot{\lambda}_i & = \beta \sum_{j\in\mathcal{N}_i}\sign(\lambda_j-\lambda_i) + \gamma(A_ix_i-b_i)\\
\dot{\zeta}_i & = \alpha \sum_{j\in \mathcal{N}_i}\sign(\eta_j - \eta_i)\\
\eta_i & =  \zeta_i + \varphi_i(x_i)
\end{aligned}\right.
\end{equation}
where $\sign(\cdot)$ is the sign function, $\lambda_i(t)\in \mathbb{R}^l, \zeta_i(t) \in\mathbb{R}^m$. The initial conditions of algorithm \eqref{eq:algorithm2} are provided as follows:
\begin{align}
x_i(0) \in \Omega_i, \quad \lambda_i(0) = A_ix_i(0) - b_i, \quad \zeta_i(0) = \mathbf{0}_m\text{.}
\end{align}

To set the parameters $\alpha, \beta, \gamma$ in algorithm \eqref{eq:algorithm2} needs the values of $\bar{f}_1, \bar{f}_2$ in \eqref{eq:f1} and \eqref{eq:f2}, which involves additional distributed calculation as follows: take variables $z_i$ with $z_i(0) = w_i$ for $i \in \mathcal{V}$, and update them by $z_i(k+1) = \sup\{z_i(k), z_j(k), j\in \mathcal{N}_i\}$; in this way, one can obtain $\sup\{w_i,i\in \mathcal{V}\}$ within $N-1$ steps.


\begin{rem}
The design idea for this continuous-time algorithm
\eqref{eq:algorithm2} is totally different from that given in
\cite{Koshal2016Distributed} for discrete-time algorithms. Note that $\eta_i$ in our algorithm is to estimate the value of aggregation $\sigma(x)$, while $\lambda_i$ is to estimate a dual variable associated with the coupled constraints.  Moreover, our algorithm is fully distributed and also preserves some privacy because the information, such as the local cost functions, decision variables, and coefficients of the coupled constraints, need not be shared.
\end{rem}

The solution of \eqref{eq:algorithm2} (with a discontinuous righthand
side) can be well defined in the Filippov sense, which is unique and
absolutely continuous.   For convenience, we will not mention ``in
the Filippov sense'' in the sequel when there is no confusion.

\subsection{Convergence Analysis}
Here we give some correctness and convergence analysis for our proposed algorithm.

First of all, we get the following result by extending \cite[Theorem 3]{Chen2012Distributed}.

\begin{lemma} \label{lemma:finite_time}
Under Assumption \ref{assum:graph}, if
\begin{equation*}
\alpha > (N-1) \bar{f}, \quad \bar{f} \geq \sup_{t\in [0,\infty)} \|\dot{r}_i(t)\|, \forall \, i \in \mathcal{V}
\end{equation*}
then the following system
\begin{equation}\label{eq:lem_algorithm1}
\left\{\begin{aligned}
\dot{\mu}_i(t) & =\alpha \sum_{j\in \mathcal{N}_i(t)}\sign[\nu_j(t) - \nu_i(t)]\\
\nu_i(t) & = \mu_i(t) + r_i(t),\quad \mu_i(0) = 0
\end{aligned}\right.
\end{equation}
can make $\lim_{t\to +\infty} \nu_i(t)- \frac{1}{N}\sum_{k=1}^N r_k(t)$ $=0$ for any $i\in\mathcal{V}$ with an exponential convergence rate.
\end{lemma}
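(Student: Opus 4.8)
The plan is to exploit the conservation structure of \eqref{eq:lem_algorithm1}, reduce the average-tracking problem to a pure consensus problem for the estimates $\nu_i$, and then close the argument with a nonsmooth Lyapunov estimate whose dissipation rate is governed precisely by the threshold $(N-1)\bar f$. Since the right-hand side is discontinuous, every derivative below is understood in the Filippov/Clarke set-valued sense; moreover I argue componentwise, so from now on $\nu_i,\mu_i,r_i,e_i$ denote one fixed scalar component (with $|\dot r_i|\le\|\dot r_i\|\le\bar f$), the vector claim following by applying the scalar result to each component. First I sum the $\dot\mu_i$ equations. Because $\mathcal{G}(t)$ is undirected, for each edge $\{i,j\}$ one may pick a consistent Filippov selection $g_{ij}\in\operatorname{SIGN}(\nu_j-\nu_i)$ with $g_{ij}=-g_{ji}$, so that $\alpha\sum_i\sum_{j\in\mathcal{N}_i(t)}g_{ij}$ telescopes to $0$. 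Hence $\sum_i\mu_i(t)\equiv\sum_i\mu_i(0)=0$, and therefore $\frac1N\sum_i\nu_i(t)=\frac1N\sum_k r_k(t)=:\bar r(t)$ for all $t$. This identity is the crux: the spatial average of the estimates already equals the target, so it remains only to drive the estimates into consensus.

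Set $e_i=\nu_i-\bar r$, so that $\sum_i e_i=0$ and, using $\nu_j-\nu_i=e_j-e_i$, $\dot e_i\in\alpha\sum_{j\in\mathcal{N}_i(t)}\operatorname{SIGN}(e_j-e_i)+\dot r_i-\dot{\bar r}$. With $V=\tfrac12\sum_i e_i^2$ I compute the set-valued derivative along the dynamics. Pairing the two endpoints of each undirected edge turns the interaction term into $-\alpha\sum_{\{i,j\}\in\mathcal{E}(t)}|e_i-e_j|$, which is nonpositive even where $e_i=e_j$ because the set-valued sign is odd; meanwhile $\sum_i e_i=0$ annihilates the common $\dot{\bar r}$ term and leaves $\sum_i e_i\dot r_i\le\bar f\sum_i|e_i|$. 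Thus
\begin{equation*}
\dot V\le-\alpha\sum_{\{i,j\}\in\mathcal{E}(t)}|e_i-e_j|+\bar f\sum_i|e_i|.
\end{equation*}

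The decisive estimate is the combinatorial bound, valid for any connected graph on $\mathcal{V}$ and any $e$ with $\sum_i e_i=0$,
\begin{equation*}
\sum_{\{i,j\}\in\mathcal{E}}|e_i-e_j|\ \ge\ \max_i e_i-\min_i e_i\ \ge\ \frac{1}{N-1}\sum_i|e_i|,
\end{equation*}
whose first step telescopes $|\cdot|$ along a path joining an argmax to an argmin, and whose second step uses that $\sum_i e_i=0$ allows at most $N-1$ strictly positive (resp.\ negative) entries. Substituting yields $\dot V\le-\bigl(\tfrac{\alpha}{N-1}-\bar f\bigr)\sum_i|e_i|$, strictly negative for $e\neq0$ exactly because $\alpha>(N-1)\bar f$. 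Since $\sum_i|e_i|\ge\bigl(\sum_i e_i^2\bigr)^{1/2}=\sqrt{2V}$, one gets $\dot V\le-c\sqrt{2V}$ with $c=\tfrac{\alpha}{N-1}-\bar f>0$, so $V$ vanishes in finite time; hence every component of $\nu_i(t)-\bar r(t)$ tends to $0$ for each $i$ at (at least) the claimed exponential rate, a nonsmooth finite-time/invariance theorem for differential inclusions making the last step rigorous.

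I expect the principal difficulty to lie in reconciling the nonsmoothness with the time-varying topology: one must justify the antisymmetric Filippov selection used in the conservation step, confirm that the edge-pairing dissipation bound holds for the set-valued right-hand side and uniformly over the switching edge sets $\mathcal{E}(t)$ (which is exactly where connectivity at every instant enters), and invoke a nonsmooth Lyapunov/finite-time result valid under measurable switching. By contrast, the conservation identity and the graph inequality are comparatively routine.
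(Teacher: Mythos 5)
Your proposal is correct and follows essentially the same skeleton as the paper's proof: the conservation identity $\sum_i\mu_i(t)\equiv 0$ giving $\tfrac1N\sum_i\nu_i=\tfrac1N\sum_k r_k$, the quadratic consensus Lyapunov function (your $V=\tfrac12\sum_i e_i^2$ is literally the paper's $V=\tfrac12\|\nu-\tfrac1N\mathbf{1}\mathbf{1}^T\nu\|^2$), and the same two combinatorial facts (edge sum dominates the range via a path, and the range dominates $\tfrac{1}{N-1}\sum_i|e_i|$ via zero mean). Where you genuinely diverge is the endgame: the paper stops at $\dot V\leq -(\alpha-(N-1)\bar f)\,\epsilon(t)$ with $\epsilon$ the diameter, integrates to get $\epsilon\to 0$, and only then observes that $V\leq\epsilon$ eventually so that $\dot V\leq -cV$ holds for large $t$, yielding the stated exponential rate; you instead pass through $\sum_i|e_i|\geq\sqrt{2V}$ to obtain $\dot V\leq -c\sqrt{2V}$ and conclude finite-time convergence, which is strictly stronger and implies the exponential claim trivially, while avoiding the paper's slightly awkward two-stage argument (whose step ``$V(t)\leq\epsilon(t)$ for large $t$'' silently uses that both quantities are already small). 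The one caveat, which applies equally to the paper and which you correctly flag rather than resolve, is the Filippov technicality: the bounds $\sum_i\dot\mu_i=0$ and $\sum_i e_i\dot\mu_i\leq-\alpha\sum_{\{i,j\}}|e_i-e_j|$ require an antisymmetric selection from the set-valued sign on edges where $e_i=e_j$, and a fully rigorous treatment would argue via the set-valued Lie derivative of $V$ along the actual Filippov solution; since the paper asserts these identities without comment, this is not a gap relative to the published argument.
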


\begin{proof}
Since $\mathcal{G}(t)$ is connected and $\mu_i(t)$ is absolutely continuous, $\sum_{i=1}^N\dot{\mu}_i(t) =
0$ for almost all $t\geq 0$, which implies $\sum_{i=1}^N\mu_i(t) = \sum_{i=1}^N\mu_i(0) = 0$. Therefore, $\sum_{i=1}^N \nu_i(t) = \sum_{i=1}^N r_i(t)$.   Moreover, it is not hard to obtain
\begin{itemize}
\item $\sum_{i=1}^N\nu_i\sum_{j\in\mathcal{N}_i}\sign(\nu_j - \nu_i) = \frac{1}{2} \sum_{(i,j)\in\mathcal{E}(t)}|\nu_i -
\nu_j|$;
\item for any $1\leq k,l\leq N$, $|\mu_k - \mu_l|\leq \frac{1}{2}\sum_{(i,j)\in\mathcal{E}(t)}|\nu_i - \nu_j|$;
\item $\sum_{i=1}^N|\nu_i - \frac{1}{N}\bm{1}^T\nu|\leq \frac{1}{N}\sum_{i=1}^N\sum_{j=1,j\neq i}^N|\nu_i-\nu_j| \leq \frac{N-1}{2}\sum_{(i,j)\in\mathcal{E}(t)}|\nu_i - \nu_j|$.
\end{itemize}
Define $\epsilon(t) \triangleq \max_{i,j\in\mathcal{V}}|\nu_i(t)-\nu_j(t)|$ and
\begin{equation*}
V(t) \triangleq \frac{1}{2} \|\nu(t) -
\frac{1}{N}\mathbf{1}\mathbf{1}^T\nu(t)\|^2.
\end{equation*}
Clearly, $\epsilon(t) \leq \frac{1}{2} \sum_{(i,j)\in\mathcal{E}(t)}|\nu_i(t) -
\nu_j(t)|$ and $V(t) = \frac{1}{2}\sum_{i=1}^N|\nu_i(t)- \frac{1}{N}\sum_{k=1}^N r_k(t)|^2 \geq 0$. Since $V(t)$ is absolutely continuous, we have that, for almost all $t>0$,
\begin{align*}
\dot{V}(t) & \leq - \frac{\alpha-(N-1)\bar{f}}{2}\sum_{(i,j)\in\mathcal{E}(t)}|\nu_i(t) -
\nu_j(t)|\\
& \leq -(\alpha-(N-1)\bar{f}) \epsilon(t)  \leq 0 \text{.}
\end{align*}
Because $\epsilon(t)\geq 0$ is absolutely continuous and $(\alpha-(N-1)\bar{f})\int_{0}^{+\infty}\epsilon(t) \leq V(0) < +\infty$, $\epsilon(t)\to 0$ as $t\to +\infty$. Then $V(t) \leq \epsilon(t)$ and $\dot{V}(t) \leq - (\alpha-(N-1)\bar{f}) V(t)$ for almost all $t\in [\tilde{t}, +\infty)$ with a sufficient large $\tilde{t}$, which implies the conclusion.
\end{proof}

Next, we give the following result, whose proof is quite straightforward by Lemma \ref{lemma:finite_time}.

\begin{lemma}\label{thm:Equivalent_algorithm}
Consider algorithm \eqref{eq:algorithm2} under Assumption \ref{assum:graph}.
Let $\sigma(x)$ be in \eqref{eq:sigma} and define
\begin{equation}\label{eq:barlambda}
\bar{\lambda}(t) = \frac{\gamma}{N}\left(Ax(0) -b +\int_0^t(Ax(\tau)-b)d\tau\right)\text{.}
\end{equation}
Then, for any $i\in \mathcal{V}$, $\|\eta_i(t) - \sigma(x(t))\| \to 0$ and $\|\lambda_i(t) - \bar{\lambda}(t)\|\to 0$ exponentially.
\end{lemma}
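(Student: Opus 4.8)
The plan is to recognize that both the $\eta$- and the $\lambda$-subsystems of \eqref{eq:algorithm2} are, after a suitable change of variables, instances of the tracking dynamics \eqref{eq:lem_algorithm1} already analyzed in Lemma \ref{lemma:finite_time}. The whole statement then follows by invoking that lemma twice. Concretely, I must cast each subsystem into the form $\dot{\mu}_i=\alpha\sum_j\sign(\nu_j-\nu_i)$, $\nu_i=\mu_i+r_i$, $\mu_i(0)=0$, and verify the forcing-rate hypothesis $\sup_t\|\dot{r}_i\|\le\bar{f}$ with the gain exceeding $(N-1)\bar{f}$; the parameter conditions $\alpha>(N-1)\bar{f}_1$ and $\beta>\gamma(N-1)\bar{f}_2$ together with the bounds \eqref{eq:f1} and \eqref{eq:f2} are tailored precisely for this.

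A preliminary feasibility fact is needed: each projected trajectory stays in $\Omega_i$. Writing the first line of \eqref{eq:algorithm2} as $\dot{x}_i+x_i=P_{\Omega_i}(w_i)$, where $w_i$ denotes the projected argument, and solving by variation of constants gives
\begin{equation*}
x_i(t)=e^{-t}x_i(0)+\int_0^t e^{-(t-s)}P_{\Omega_i}(w_i(s))\,ds,
\end{equation*}
which is a convex combination (the weight $e^{-t}$ and the kernel $e^{-(t-s)}$ sum to one over $[0,t]$) of $x_i(0)\in\Omega_i$ and the points $P_{\Omega_i}(w_i(s))\in\Omega_i$; since $\Omega_i$ is closed and convex, $x_i(t)\in\Omega_i$ for all $t\ge 0$. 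Consequently $\|\dot{x}_i\|=\|P_{\Omega_i}(w_i)-x_i\|\le\sup_{y,z\in\Omega}\|y-z\|$, because both points lie in $\Omega$.

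For the aggregation estimate I would take $\mu_i=\zeta_i$, $r_i=\varphi_i(x_i)$ and $\nu_i=\eta_i$, so that the last two lines of \eqref{eq:algorithm2} coincide with \eqref{eq:lem_algorithm1} at gain $\alpha$, with $\mu_i(0)=\zeta_i(0)=\mathbf{0}_m$. By smoothness $\dot{r}_i=\nabla\varphi_i(x_i)\dot{x}_i$ almost everywhere, hence $\|\dot{r}_i\|\le\|\nabla\varphi_i(x_i)\|\,\|\dot{x}_i\|\le\bar{f}_1$ by \eqref{eq:f1} and the bound on $\|\dot{x}_i\|$ just obtained. Since $\alpha>(N-1)\bar{f}_1$, Lemma \ref{lemma:finite_time} yields $\eta_i(t)-\frac1N\sum_{k}\varphi_k(x_k(t))\to 0$ exponentially, and the average equals $\sigma(x(t))$ by \eqref{eq:sigma}, giving the first claim. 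For the dual estimate the subsystem is not literally of the form \eqref{eq:lem_algorithm1}, because of the additive forcing $\gamma(A_ix_i-b_i)$; the trick is to absorb it into the tracked signal by setting $r_i(t)=\lambda_i(0)+\gamma\int_0^t(A_ix_i(\tau)-b_i)\,d\tau$ and $\mu_i=\lambda_i-r_i$. Then $\dot{r}_i=\gamma(A_ix_i-b_i)$ cancels the forcing, leaving $\dot{\mu}_i=\beta\sum_j\sign(\lambda_j-\lambda_i)=\beta\sum_j\sign(\nu_j-\nu_i)$ with $\nu_i=\lambda_i$, while $\mu_i(0)=\lambda_i(0)-r_i(0)=\mathbf{0}_l$ since $r_i(0)=\lambda_i(0)=A_ix_i(0)-b_i$. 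Here $\|\dot{r}_i\|=\gamma\|A_ix_i-b_i\|\le\gamma\bar{f}_2$ by \eqref{eq:f2} (using $x_i\in\Omega_i$), and $\beta>\gamma(N-1)\bar{f}_2$, so Lemma \ref{lemma:finite_time} applies at gain $\beta$ and gives $\lambda_i(t)-\frac1N\sum_k r_k(t)\to 0$ exponentially; finally $\frac1N\sum_k r_k(t)=\frac{\gamma}{N}\big(Ax(0)-b+\int_0^t(Ax(\tau)-b)\,d\tau\big)=\bar{\lambda}(t)$ by \eqref{eq:barlambda} and $Ax-b=\sum_i(A_ix_i-b_i)$.

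I expect the main obstacle to be the dual part, specifically spotting the time-varying substitution $r_i(t)=\lambda_i(0)+\gamma\int_0^t(A_ix_i-b_i)\,d\tau$ that converts the forced consensus dynamics into the unforced tracking form of \eqref{eq:lem_algorithm1}, together with the bookkeeping needed to confirm $\mu_i(0)=\mathbf{0}_l$ and to identify the tracked average with $\bar{\lambda}(t)$. The feasibility step $x_i(t)\in\Omega_i$, which underlies the uniform bounds on $\|\dot{x}_i\|$ and on $\|A_ix_i-b_i\|$, is the other point that must be handled with care, since the hypotheses of Lemma \ref{lemma:finite_time} require a genuinely finite $\sup_t\|\dot{r}_i\|$.
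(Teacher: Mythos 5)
Your proposal takes essentially the same route as the paper: the paper's proof of this lemma is simply the observation that both the $(\zeta_i,\eta_i)$ and the $\lambda_i$ subsystems reduce to the tracking dynamics \eqref{eq:lem_algorithm1}, so that Lemma~\ref{lemma:finite_time} applies twice, and your write-up supplies exactly the details the paper leaves implicit (the invariance $x_i(t)\in\Omega_i$, the bounds $\|\dot r_i\|\le\bar f_1$ and $\|\dot r_i\|\le\gamma\bar f_2$ matching the gain conditions on $\alpha$ and $\beta$, and the substitution $r_i(t)=\lambda_i(0)+\gamma\int_0^t(A_ix_i-b_i)\,d\tau$ for the forced consensus dynamics). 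One bookkeeping point deserves care: with the stated initialization $\lambda_i(0)=A_ix_i(0)-b_i$ you get $\frac1N\sum_k r_k(t)=\frac{1}{N}\bigl(Ax(0)-b\bigr)+\frac{\gamma}{N}\int_0^t\bigl(Ax(\tau)-b\bigr)\,d\tau$, which matches \eqref{eq:barlambda} only up to the constant $\frac{1-\gamma}{N}\bigl(Ax(0)-b\bigr)$; this is an inconsistency already present between the paper's initial condition and its definition of $\bar\lambda$ rather than a flaw in your argument, but your final identity $\frac1N\sum_k r_k(t)=\bar\lambda(t)$ should not be asserted without either taking $\gamma=1$, assuming $Ax(0)=b$, or adjusting $\lambda_i(0)$ to $\gamma(A_ix_i(0)-b_i)$.
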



Let $X^*\times \Lambda^*$ be the solution set of the following equation with respect to $(x,\bar{\lambda})$
\begin{equation}\label{eq:KKT}
\left\{\begin{aligned}
0 & = P_{\Omega}(x-F(x) - \frac{\gamma}{N}A^T\bar{\lambda})-x\\
0 & = \frac{\gamma}{N}(Ax - b)
\end{aligned}\right.\text{,}
\end{equation}
and let $X^+\times \Lambda^+$ be the positive limit set of $(x(t),\bar{\lambda}(t))$, where $x(t)$ is in \eqref{eq:algorithm2} and $\bar{\lambda}(t)$ is in \eqref{eq:barlambda}. 

\begin{lemma}\label{eq:positive_limit}
If $\lim_{t\to+\infty}\dot{x}(t) = 0$ and $\lim_{t\to +\infty}\dot{\bar{\lambda}}(t) = 0$, then $X^+\times \Lambda^+ \subseteq X^*\times \Lambda^*$.
\end{lemma}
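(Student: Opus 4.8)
The plan is to characterize membership in the positive limit set directly: a joint point $(\bar{x},\hat{\lambda})\in X^+\times\Lambda^+$ means there is a sequence $t_k\to+\infty$ with $x(t_k)\to\bar{x}$ and $\bar{\lambda}(t_k)\to\hat{\lambda}$, and I would verify that any such $(\bar{x},\hat{\lambda})$ solves both equations of \eqref{eq:KKT} by passing to the limit along $t_k$ in the dynamics \eqref{eq:algorithm2}. The two hypotheses $\dot{x}(t)\to 0$ and $\dot{\bar{\lambda}}(t)\to 0$ are exactly what let the residuals of \eqref{eq:KKT} be read off as these limits. Note that the $x$-subsystem of \eqref{eq:algorithm2} has a continuous right-hand side (the projection is Lipschitz by \eqref{eq:pro_Lip}, $G_i$ is continuous by the smoothness in Assumption \ref{assum:basic}, and $\eta_i,\lambda_i$ are absolutely continuous), so $\dot{x}(t)$ agrees with a genuine continuous function of $t$ and the limit $\dot{x}(t)\to 0$ is unambiguous.

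The second equation of \eqref{eq:KKT} is immediate. Differentiating \eqref{eq:barlambda} gives $\dot{\bar{\lambda}}(t)=\frac{\gamma}{N}(Ax(t)-b)$, so from $\dot{\bar{\lambda}}(t)\to 0$ and the continuity of $x\mapsto\frac{\gamma}{N}(Ax-b)$, evaluating along $t_k$ with $x(t_k)\to\bar{x}$ yields $\frac{\gamma}{N}(A\bar{x}-b)=0$.

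For the first equation I would pass to the limit in the $\dot{x}_i$-equation. The crucial algebraic fact is the identity $G_i(x_i,\sigma(x))=\nabla_{x_i}J_i(\cdot,x_{-i})=F_i(x)$, i.e.\ feeding the true aggregate into $G_i$ returns the $i$th block of $F$. Using Lemma \ref{thm:Equivalent_algorithm} ($\eta_i(t)-\sigma(x(t))\to 0$ and $\lambda_i(t)-\bar{\lambda}(t)\to 0$) together with the continuity of $\sigma$, of $G_i$, and of $P_{\Omega_i}$, I get $\eta_i(t_k)\to\sigma(\bar{x})$ and $\lambda_i(t_k)\to\hat{\lambda}$, and hence
\begin{equation*}
\dot{x}_i(t_k)\to P_{\Omega_i}\Big(\bar{x}_i-F_i(\bar{x})-\frac{\gamma}{N}A_i^T\hat{\lambda}\Big)-\bar{x}_i.
\end{equation*}
Since $\dot{x}(t)\to 0$ forces this limit to be $0$ for every $i$, and since $\Omega=\Omega_1\times\cdots\times\Omega_N$ makes $P_\Omega$ separable with $A^T\hat{\lambda}=col(A_1^T\hat{\lambda},\dots,A_N^T\hat{\lambda})$, stacking the $N$ blocks recovers $0=P_\Omega(\bar{x}-F(\bar{x})-\frac{\gamma}{N}A^T\hat{\lambda})-\bar{x}$, the first equation of \eqref{eq:KKT}.

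The routine parts are the continuity and Lipschitz estimates; the step that needs the most care is the chain of limits producing $\eta_i(t_k)\to\sigma(\bar{x})$ and $\lambda_i(t_k)\to\hat{\lambda}$. Here one must combine a time-varying tracking error that vanishes as $t\to\infty$ (Lemma \ref{thm:Equivalent_algorithm}) with convergence of $x(t_k)$ and $\bar{\lambda}(t_k)$ available only along the subsequence, then push everything through the continuous maps $G_i$ and $P_{\Omega_i}$; the compactness of $\Omega$ (Assumption \ref{assum:basic}) guarantees the relevant suprema and Lipschitz constants are finite, so the limits are legitimate. The identity $G_i(x_i,\sigma(x))=F_i(x)$ is what closes the argument, converting the distributed dynamics driven by local aggregate estimates into the centralized variational residual of \eqref{eq:KKT}.
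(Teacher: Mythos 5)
Your proposal is correct and follows essentially the same route as the paper's (much terser) proof: pick a joint limit point along a sequence $t_q\to\infty$, invoke Lemma \ref{thm:Equivalent_algorithm} to replace $\eta_i$ and $\lambda_i$ by $\sigma(x)$ and $\bar{\lambda}$, and pass to the limit in \eqref{eq:algorithm2} and \eqref{eq:barlambda} using the hypotheses $\dot{x}\to 0$ and $\dot{\bar{\lambda}}\to 0$ to read off the two equations of \eqref{eq:KKT}. You simply make explicit the continuity, separability of $P_\Omega$, and the identity $G_i(x_i,\sigma(x))=F_i(x)$ that the paper leaves implicit.
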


\begin{proof}
If $X^+\times \Lambda^+ \neq \emptyset$, then, for any
$(x^+,\bar{\lambda}^+) \in X^+\times \Lambda^+$, there exists
$\{t_q\}_{q=1}^{+\infty}$ such that $\lim_{q\to+\infty}x(t_q) = x^+$
and $\lim_{q\to+\infty}\bar{\lambda}(t_q) = \bar{\lambda}^+$. By
taking the limit of $t_q$ to \eqref{eq:algorithm2} and
\eqref{eq:barlambda}, one obtains according to Lemma
\ref{thm:Equivalent_algorithm} that $(x^+,\bar{\lambda}^+)$ is a
solution of equation \eqref{eq:KKT}. Thus, the conclusion follows.
\end{proof}

The next result reveals another property of $X^*\times \Lambda^*$, related to the variational GNE.

\begin{theorem}\label{thm:KKT}
Under Assumption \ref{assum:basic}, $x^*$ is the variational GNE if
and only if there exists some $\bar{\lambda}^*\in \mathbb{R}^l$ such that
$(x^*, \bar{\lambda}^*) \in X^*\times \Lambda^*$.
\end{theorem}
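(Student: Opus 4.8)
The plan is to show that the two conditions in the system \eqref{eq:KKT} are exactly the KKT-type optimality conditions for the variational inequality $\VI(\mathcal{K},F)$ with $\mathcal{K}=\Omega\cap\mathcal{X}$, and then invoke the equivalence between $\VI(\mathcal{K},F)$ and its projected reformulation \eqref{eq:SOL2Pro}. First I would recall from \eqref{eq:SOL2Pro} that, since $\mathcal{K}$ is closed and convex, $x^*$ is a variational GNE precisely when $x^*=P_{\mathcal{K}}(x^*-F(x^*))$, equivalently when $-F(x^*)\in\mathcal{N}_{\mathcal{K}}(x^*)$. The crux is therefore to decompose this single normal-cone condition over the intersection $\mathcal{K}=\Omega\cap\mathcal{X}$ into the pair of conditions appearing in \eqref{eq:KKT}, where the first line encodes stationarity relative to $\Omega$ with a dual correction $\tfrac{\gamma}{N}A^T\bar\lambda$, and the second line enforces the feasibility $Ax-b=\mathbf{0}_l$ (i.e. $x\in\mathcal{X}$).

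The key step is a normal-cone (equivalently tangent-cone) splitting. Using the constraint qualification $0\in\rint(\Omega-\mathcal{X})$ from Assumption \ref{assum:basic}, Lemma \ref{lemma:tangent_cone} gives $\mathcal{T}_{\Omega\cap\mathcal{X}}(x^*)=\mathcal{T}_{\Omega}(x^*)\cap\mathcal{T}_{\mathcal{X}}(x^*)$, which dualizes to $\mathcal{N}_{\mathcal{K}}(x^*)=\mathcal{N}_{\Omega}(x^*)+\mathcal{N}_{\mathcal{X}}(x^*)$. Since $\mathcal{X}=\{x\mid Ax-b=\mathbf{0}_l\}$ is an affine set, its normal cone at any feasible point is the range of $A^T$, namely $\mathcal{N}_{\mathcal{X}}(x^*)=\{A^Tw\mid w\in\mathbb{R}^l\}$. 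Hence $-F(x^*)\in\mathcal{N}_{\mathcal{K}}(x^*)$ holds if and only if $x^*\in\mathcal{X}$ and there exists $w$ with $-F(x^*)-A^Tw\in\mathcal{N}_{\Omega}(x^*)$. Writing $w=\tfrac{\gamma}{N}\bar\lambda^*$ and translating the normal-cone inclusion back into projection form via the analogue of \eqref{eq:SOL2Pro} over $\Omega$ (i.e. $-F(x^*)-\tfrac{\gamma}{N}A^T\bar\lambda^*\in\mathcal{N}_\Omega(x^*)\Leftrightarrow x^*=P_\Omega(x^*-F(x^*)-\tfrac{\gamma}{N}A^T\bar\lambda^*)$) reproduces exactly the first equation of \eqref{eq:KKT}, while $x^*\in\mathcal{X}$ reproduces the second. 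Both directions of the stated equivalence follow from reading this chain forwards and backwards.

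I expect the main obstacle to be the rigorous justification of the normal-cone sum formula and the existence of the multiplier $\bar\lambda^*$. The tangent-cone intersection from Lemma \ref{lemma:tangent_cone} must be dualized to a normal-cone sum; this polarity step is where the constraint qualification does its real work, since without $0\in\rint(\Omega-\mathcal{X})$ one only gets an inclusion rather than equality of cones, and the Minkowski sum of two closed convex cones need not be closed. I would handle this by appealing to the standard convex-analytic fact that the polar of an intersection of tangent cones equals the (closed) sum of the normal cones, which holds as an \emph{exact} sum precisely under the relative-interior condition guaranteed by Assumption \ref{assum:basic}. A secondary technical point is confirming that the scaling by $\tfrac{\gamma}{N}>0$ is harmless: since it is a strictly positive constant, absorbing it into the free multiplier $\bar\lambda^*$ affects neither the existence nor the inclusion, so the correspondence between the VI solution $x^*$ and the pair $(x^*,\bar\lambda^*)\in X^*\times\Lambda^*$ is preserved.
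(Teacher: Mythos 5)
Your proposal is correct and takes essentially the same route as the paper: both reduce the statement to the normal-cone characterization $-F(x^*)\in\mathcal{N}_{\mathcal{K}}(x^*)$ of the variational GNE and then split it over $\mathcal{K}=\Omega\cap\mathcal{X}$ via Lemma \ref{lemma:tangent_cone} and the constraint qualification to produce the multiplier $\bar{\lambda}^*$, with sufficiency being the same direct verification that the $A^T\bar{\lambda}^*$ term vanishes against vectors of $\mathcal{K}$. The only difference is presentational: where you invoke the normal-cone sum formula $\mathcal{N}_{\mathcal{K}}(x^*)=\mathcal{N}_{\Omega}(x^*)+\mathcal{N}_{\mathcal{X}}(x^*)$ (correctly flagging that exactness of the sum is what $0\in\rint(\Omega-\mathcal{X})$ buys), the paper re-derives that multiplier existence by hand through a separating-hyperplane contradiction combined with the tangent-cone intersection, so the closedness issue you identify is the same one the paper resolves implicitly by the same relative-interior condition.
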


\begin{proof}
Sufficiency. Suppose $(x^*, \bar{\lambda}^*) \in X^*\times
\Lambda^*$. Then it follows from the definition of projection
operator that $x^* \in \Omega$. Moreover, $x^*\in \mathcal{X}$ since
$Ax^*-b = 0$. By \eqref{eq:SOL2Pro}, $x^* \in
\SOL(\Omega, F(x)+ \frac{\gamma}{N}A^T\bar{\lambda}^*)$, {\em i.e.},
\begin{equation}\label{eq:inequality1}
(x-x^*)^T(F(x^*)+\frac{\gamma}{N}A^T\bar{\lambda}^*) \geq 0,\,\forall\, x\in \Omega\text{.}
\end{equation}
Because $\mathcal{K}\subset \Omega$ and
$(x^*)^TA^T\bar{\lambda}^* = b^T\bar{\lambda}^*$, we have
\begin{multline*}
(x-x^*)^TF(x^*) \geq -\frac{\gamma}{N}(x-x^*)^TA^T\bar{\lambda}^* \\
= -\frac{\gamma}{N}(\bar{\lambda}^*)^T(Ax-b) = 0,\,\forall\, x\in \mathcal{K}\text{.}
\end{multline*}
Thus, $\{x^*\} = \SOL(K,F)$, {\em i.e.}, $x^*$ is the variational GNE.

Necessity. Suppose that $x^*$ is the variational GNE. We claim that there exists some $\bar{\lambda}^*$ such that
\begin{equation}\label{eq:inclusion_normal}
-F(x^*) \in \frac{\gamma}{N}A^T\bar{\lambda}^* + \mathcal{N}_\Omega(x^*)\text{.}
\end{equation}
Otherwise, there is a hyperplane separating the point $-F(x^*)$ and
the set $\{A^T\bar{\lambda}\,|\, \bar{\lambda} \in \mathbb{R}^l\} +
\mathcal{N}_\Omega(x^*)$. Namely, there is a vector $\omega \in
\mathbb{R}^{n}$ such that
\begin{subequations}
\begin{align}
\label{eq:seperating1}
\omega^TF(x^*) &<0\text{,}\\
\label{eq:seperating2}
A\omega &= 0 \text{,}\\
\label{eq:seperating3}
d^T\omega &\leq 0,\,\forall\, d\in \mathcal{N}_\Omega(x^*)\text{.}
\end{align}
\end{subequations}
From \eqref{eq:seperating2} and \eqref{eq:seperating3},
$\omega \in \mathcal{T}_\Omega(x^*) \cap
\mathcal{T}_\mathcal{X}(x^*)$.  Moreover, with Assumption
\ref{assum:basic}, $\omega \in \mathcal{T}_{\Omega\cap
\mathcal{X}}(x^*) = \mathcal{T}_\mathcal{K}(x^*)$ according to Lemma
\ref{lemma:tangent_cone}. Recalling the definition of the tangent
cone, there exist $x_k \in \mathcal{K}$ and $t_k>0$ such that
\begin{equation*}
x_k\to x^*,\,t_k\to 0, \text{ and } \lim_{k\to \infty}\frac{x_k-x^*}{t_k} = \omega\text{.}
\end{equation*}
Consequently,
\begin{equation*}
\omega^TF(x^*) = \lim_{k\to \infty}\frac{(x_k-x^*)^TF(x^*)}{t_k}\geq 0\text{,}
\end{equation*}
which contradicts to \eqref{eq:seperating1}. It completes the proof.
\end{proof}

Clearly, $X^* = \{x^*\}$ from Theorems \ref{thm:G2VI} and \ref{thm:KKT}, where $x^*$ is the unique variational GNE under Assumption \ref{assum:basic}.

Finally, it is time to give our convergence result.

\begin{theorem}\label{thm:smooth_convergence}
Under Assumptions \ref{assum:basic} and \ref{assum:graph}, system
\eqref{eq:algorithm2} is stable and converges to the set $X^*\times
\Lambda^*$, that is,
\begin{equation}\label{eq:convergence}
\left\{\begin{aligned}
&\lim_{t\to +\infty}\|\lambda_i(t)-\bar{\lambda}(t)\| = 0, \,\forall\, i=1,...,N\\
&\lim_{t\to +\infty} ||(x(t),\bar{\lambda}(t))-P_{X^*\times \Lambda^*}(x(t),\bar{\lambda}(t))|| = 0
\end{aligned}\right.
\end{equation}
with $\bar{\lambda}(t)$ defined in \eqref{eq:barlambda}.
\end{theorem}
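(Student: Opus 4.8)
The plan is to collapse the full non-smooth system onto the slow variables $(x,\bar\lambda)$ and then build one Lyapunov function from the regularized gap function of Lemma~\ref{lemma:projection_differential}. First I would dispose of the first limit in \eqref{eq:convergence}: it is precisely the content of Lemma~\ref{thm:Equivalent_algorithm}, so $\|\lambda_i(t)-\bar\lambda(t)\|\to0$ needs no further argument. That same lemma gives $\eta_i(t)\to\sigma(x(t))$ and $\lambda_i(t)\to\bar\lambda(t)$ exponentially, so along any trajectory the coupling terms $G_i(x_i,\eta_i)$ and $\tfrac\gamma N A_i^T\lambda_i$ in \eqref{eq:algorithm2} differ from $G_i(x_i,\sigma(x))$ and $\tfrac\gamma N A_i^T\bar\lambda$ by quantities that vanish exponentially. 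Stacking the players and using $\text{col}\{G_i(x_i,\sigma(x))\}=F(x)$, the $(x,\bar\lambda)$-dynamics equals, up to this vanishing perturbation, the augmented projected system $\dot z=P_{\tilde\Omega}(z-\tilde F(z))-z$ with $z=(x,\bar\lambda)$, $\tilde\Omega=\Omega\times\mathbb R^l$ and $\tilde F(z)=\big(F(x)+\tfrac\gamma N A^T\bar\lambda,\,-\tfrac\gamma N(Ax-b)\big)$; the projected dynamics keeps $x(t)\in\Omega$, and the equilibria of this system are exactly the solutions of \eqref{eq:KKT}, i.e.\ $X^*\times\Lambda^*$.

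Next I would apply Lemma~\ref{lemma:projection_differential} to the pair $(\tilde\Omega,\tilde F)$ --- note $\tilde F$ is $C^1$ by the smoothness in Assumption~\ref{assum:basic} --- to obtain the gap function $\tilde g(z)=(z-\tilde H(z))^T\tilde F(z)-\tfrac12\|z-\tilde H(z)\|^2\ge0$ with $\tilde H(z)=P_{\tilde\Omega}(z-\tilde F(z))$ and the stated gradient. Fixing any $z^*=(x^*,\bar\lambda^*)\in X^*\times\Lambda^*$ (nonempty by Theorems~\ref{thm:G2VI} and~\ref{thm:KKT}), I would take $V(z)=\tilde g(z)+\tfrac12\|z-z^*\|^2\ge0$. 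Using $z-\tilde H(z)=-\dot z$, the gradient formula of Lemma~\ref{lemma:projection_differential}, the projection inequality \eqref{eq:pro_projection} with $y=z^*$, the variational inequality at $z^*$, and the fact that the skew-symmetric coupling cancels in every quadratic form while the monotone block contributes $(x-x^*)^T(F(x)-F(x^*))$, I expect the troublesome cross term $\tilde F(z)^T\dot z$ to cancel between $\dot{\tilde g}$ and the derivative of the quadratic term, leaving $\dot V\le-\dot x^T\tfrac12(\mathcal J F(x)+\mathcal J F(x)^T)\dot x-(x-x^*)^T(F(x)-F(x^*))\le0$, where $\tfrac12(\mathcal J F+\mathcal J F^T)\succeq0$ since $F$ is monotone. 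The decisive feature is the second term: by strict monotonicity it is strictly negative whenever $x\neq x^*$, so $\dot V=0$ forces $x=x^*$.

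From here I would argue convergence. Because $V$ is bounded below and non-increasing (modulo the exponentially small perturbation, whose time integral is finite), $V$ converges and the trajectory is bounded, which gives the claimed stability; the integrable strictly-negative term then yields, via Barbalat's lemma together with strict monotonicity on the compact set $\Omega$, that $x(t)\to x^*$, and hence $\dot{\bar\lambda}=\tfrac\gamma N(Ax-b)\to0$. To close the dual direction I would use the invariance encoded in Lemma~\ref{eq:positive_limit}: on the $\omega$-limit set $\dot V\equiv0$ forces $x\equiv x^*$ and thus $\dot x\equiv0$, so $\dot x\to0$ and $\dot{\bar\lambda}\to0$, whereupon Lemma~\ref{eq:positive_limit} places the $\omega$-limit set inside $X^*\times\Lambda^*$; this is exactly the second limit in \eqref{eq:convergence}.

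The main obstacle is this last step rather than the Lyapunov construction. Two points need care: first, rigorously absorbing the vanishing tracking-error perturbation from Lemma~\ref{thm:Equivalent_algorithm}, so that the clean derivative estimate obtained for the idealized augmented system still yields $\dot V\le(\text{integrable})$ and the invariance conclusion for the actual non-smooth Filippov trajectory; and second, the dual set-convergence, since $\Lambda^*$ need not be a singleton and $\bar\lambda$ is not controlled by strict monotonicity. Establishing $\dot x\to0$ and $\dot{\bar\lambda}\to0$ and then passing through Lemma~\ref{eq:positive_limit} is precisely what converts the pointwise convergence of $x$ into distance-convergence of $(x,\bar\lambda)$ to the whole set $X^*\times\Lambda^*$.
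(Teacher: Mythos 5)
Your proposal is correct and follows essentially the same route as the paper: the same reduction to a perturbed augmented projected dynamics in $(x,\bar\lambda)$ with an exponentially vanishing, integrable error, the same Lyapunov function combining Fukushima's regularized gap function with $\tfrac12\|\theta-\theta^*\|^2$, the same identification of the strictly monotone term $(x-x^*)^T(F(x)-F(x^*))$ as the decisive negative contribution, and the same closing argument via Barbalat's lemma and Lemma~\ref{eq:positive_limit}. The only cosmetic difference is that the paper makes the derivative decomposition explicit as four nonnegative terms $W_1,\dots,W_4$ (projection inequality, the VI at $\theta^*$, monotonicity, and positive semidefiniteness of $\mathcal{J}\widehat F$), whereas you keep only the two terms you need.
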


\begin{proof}
Since $\dot{x} \in \mathcal{T}_{\Omega}(x)$, $x(t)\in
\Omega$ for all $t\geq 0$. Also, we have
\begin{multline}\label{eq:dotx}
\dot{x}_i(t) = P_{\Omega_i}(x_i - G_i(x_i,\sigma(x)) - \frac{\gamma}{N}A_i^T\bar{\lambda}(t))\\
 - x_i(t) + e_i(t)\text{,}
\end{multline}
where $\|e_i(t)\| = \|P_{\Omega_i}(x_i - G_i(x_i,\sigma(x)) -
\frac{\gamma}{N}A_i^T\bar{\lambda}(t))- P_{\Omega_i}(x_i -
G_i(x_i,\eta_i) - \frac{\gamma}{N}A_i^T\lambda_i(t))\|\leq
\|G_i(x_i,\sigma(x)-G_i(x_i,$ $\eta_i)\|+
\|\frac{\gamma}{N}A_i^T(\bar{\lambda}(t)-\lambda_i(t))\| \leq \bar k(\|\sigma(x)-\eta_i(t)\| +
\|\bar{\lambda}(t)-\lambda_i(t)\|)$, for some $\bar k>0$. Let $e(t)
= col\{e_1(t), ... , e_N(t)\}$. Hence, $e(t)$
vanishes exponentially according to Lemma \ref{thm:Equivalent_algorithm}.

Define
\begin{equation}\label{eq:variables}
\begin{aligned}
&\theta(t) \triangleq \begin{bmatrix}x(t)\\ \bar{\lambda}(t)\end{bmatrix}, &&
\widehat{F}(\theta) \triangleq \begin{bmatrix}F(x)+\frac{\gamma}{N}A^T\bar{\lambda}\\ -\frac{\gamma}{N}(Ax-b)\end{bmatrix},\\
&\Theta \triangleq \Omega \times \mathbb{R}^l, && \widehat{H}(\theta) \triangleq P_{\Theta}(\theta-\widehat{F}(\theta))\text{,}
\end{aligned}
\end{equation}
and let $\theta^* \in X^* \times \Lambda^*$. Consider the following Lyapunov function
\begin{equation}\label{eq:Lyapunov}
V(t) \triangleq  (\theta-\widehat{H}(\theta))^{T}\widehat{F}(\theta)-\frac{1}{2}\|\theta-\widehat{H}(\theta)\|^2 + \frac{1}{2}\left\|\theta
- \theta^*\right\|^2\text{.}
\end{equation}
It follows from Lemma \ref{lemma:projection_differential} that $V(t) \geq 0$ and
\begin{equation}\label{eq:dotV}
\dot{V}(t) = (\nabla_{\theta}V)^T \dot{\theta}(t) =  (\nabla_{\theta}V)^T(\widehat{H}(\theta)-\theta) + \hat{e}(t)\text{,}
\end{equation}
where,
\begin{equation*}
\begin{aligned}
\hat{e}(t) & = [e(t), \mathbf{0}_l]^T  \nabla_{\theta}V = e(t)^T \big[F(x)+\frac{\gamma}{N}A^T\bar{\lambda}(t) \\
& \quad + (\nabla F(x) -I_n)\big(x-P_\Omega(x-F(x)-\frac{\gamma}{N}A^T\bar{\lambda}(t))\\
& \quad  + \frac{\gamma}{N} A^T(Ax-b)\big) + x(t)-x^*\big].
\end{aligned}
\end{equation*}
Since $x(t)\in \Omega$ is bounded, $\bar{\lambda}(t)$ in \eqref{eq:barlambda} satisfies $\|\bar{\lambda}(t)\| \leq
K_1+K_2t$ for $t\geq 0$ with some constants $K_1,K_2>0$. Also, since $e(t)$ is
exponentially convergent, $\|\hat{e}(t)\| \leq
(K_3+tK_4)e^{-K_5t}$ for some constants $K_3,K_4,K_5>0$, which further implies
\begin{equation}\label{eq:eint}
\int_0^{+\infty} \|\hat{e}(t)\|dt <+\infty.
\end{equation}
On the other hand, after calculations, we have
\begin{equation*}
(\nabla_{\theta}V)^T(\widehat{H}(\theta)-\theta) = - W_1(\theta) -
W_2(\theta) - W_3(\theta) - W_4(\theta),
\end{equation*}
where
\begin{subequations}\label{eq:W}
\begin{align}
\label{eq:condition1}
W_1(\theta) & = (\theta^* - \widehat{H}(\theta))^T(\widehat{F}(\theta) + \widehat{H}(\theta) - \theta)\text{,}\\
W_2(\theta) & = (\theta - \theta^*)^T\widehat{F}(\theta^*)\text{,}\\
W_3(\theta) & = (\theta - \theta^*)^T(\widehat{F}(\theta)-\widehat{F}(\theta^*))\text{,}\\
W_4(\theta) & = (\widehat{H}(\theta) - \theta)^T \mathcal{J} \widehat{F}(\theta) (\widehat{H}(\theta) - \theta)\text{.}
\end{align}
\end{subequations}
It follows from \eqref{eq:pro_projection} that $W_1(\theta) \geq 0$.
Moreover,
\begin{equation}\label{eq:projection_stationary}
W_2(\theta) \in (x-x^*)^T(- \mathcal{N}_{\Omega}(x^*)) \subseteq
\mathbb{R}_+,\,\forall\,\theta\in \Theta.
\end{equation}
Furthermore, for any $\theta,\theta' \in \Theta$ with $x\neq x'$, we obtain
\begin{equation}\label{eq:monotone_equation}
(\theta - \theta')^T (\widehat{F}(\theta)-\widehat{F}(\theta')) = (x-x')^T(F(x) - F(x')) >0\text{,}
\end{equation}
since $F(x)$ is strictly monotone. Then $W_3(\theta) \geq 0$.
Also, because $\mathcal{J} \widehat{F}(\theta)$ is positive semidefinite,
$W_4(\theta) \geq 0$. Therefore,
\begin{equation}\label{eq:V_e}
\dot{V}(t) \leq -W_3(t) +
\hat{e}(t)\text{.}
\end{equation}
Then it follows from \eqref{eq:eint} that
\begin{equation*}
\begin{aligned}
0 \leq \int_{0}^{+\infty} W_3(t)dt & \leq  V(0)-\limsup_{t\to+\infty} V(t)\\
&\quad  + \int_{0}^{+\infty} \hat{e}(t)dt <+\infty\text{,}
\end{aligned}
\end{equation*}
Consequently, $W_3(t)\to 0$ as $t\to +\infty$, which implies $\lim_{t\to +\infty} x(t) = x^*$. Moreover, $\limsup_{t\to+\infty} V(t)<+\infty$, which implies that $\bar{\lambda}(t)$ is bounded. Therefore, the positive limit set
$X^+\times \Lambda^+ \neq \emptyset$. It follows from
\eqref{eq:barlambda} that $\lim_{t\to+\infty}\dot{\bar{\lambda}}(t)
= Ax^* - b = 0$. Furthermore, $\dot{x}(t)$ is
uniformly continuous because the trajectory of
\eqref{eq:algorithm2} is absolutely continuous and the righthand
side of the differential equation with respect to $x(t)$ in
\eqref{eq:algorithm2} is uniformly continuous in $t$. Since $x(t)$ is convergent, $\lim_{t\to+\infty}\dot{x}(t) = 0$ by the well-known Barbalat's lemma. Thus,
\eqref{eq:convergence} holds according to Lemmas
\ref{thm:Equivalent_algorithm} and \ref{eq:positive_limit}.
\end{proof}

\begin{rem}
Our algorithm can be viewed as a distributed perturbed projected dynamics with the exponentially vanishing perturbation term $e_i(t)$ in \eqref{eq:dotx}. Although some projected dynamics without any perturbation was studied, e.g., in \cite{Yi2016Initialization}, the analysis for the perturbed one is novel.
\end{rem}

\section{Numerical Examples}
Two numerical examples are given in this section.

\subsection{Nash-Cournot Game}
Consider a Nash-Cournot game played by $N$ competitive firms to
produce a kind of commodity.  For $i\in\mathcal{V}=\{1,...,N\}$, firm $i$ chooses $x_i\in \Omega_i$ as the quantity of the
commodity to produce and has the cost function as $\vartheta_i(x_i,\sigma) =
(c_i- p(\sigma))x_i$, where $c_i$ is the production price of firm
$i$, and $p = d - N\sigma(x)$ is the market price determined by the
aggregation function $\sigma(x) = \frac{1}{N}\sum_{j\in \mathcal{V}}
x_j^2$. Our numerical setting is as follows.
\begin{enumerate}[(i)]
\itemsep = 0pt \parskip = 0pt
\item $N = 20$ and $\mathcal{V} = \{1,...,20\}$.
\item For each firm $i\in \mathcal{V}$, $\Omega_i = [0, 20]$, $c_i= 10+20(i-1)$ and $d=1200$.
\item Firms from $i = 1$ to $i = 10$ share a scare resource as $\sum_{i=1}^{10}x_i = 20$.
\item the communication graph $\mathcal{G}(t)$ is time-varying and randomly generated.
\item Parameter setting of our algorithm is $\alpha = 20, \beta =400$ and $\gamma = 20$.
\end{enumerate}
Figure \ref{gcds1} shows the convergence to the NE (the upper one) and GNE (the lower one), which illustrates the effectiveness of our algorithm.

\begin{figure}
  \centering
  \includegraphics[width=8.1cm]{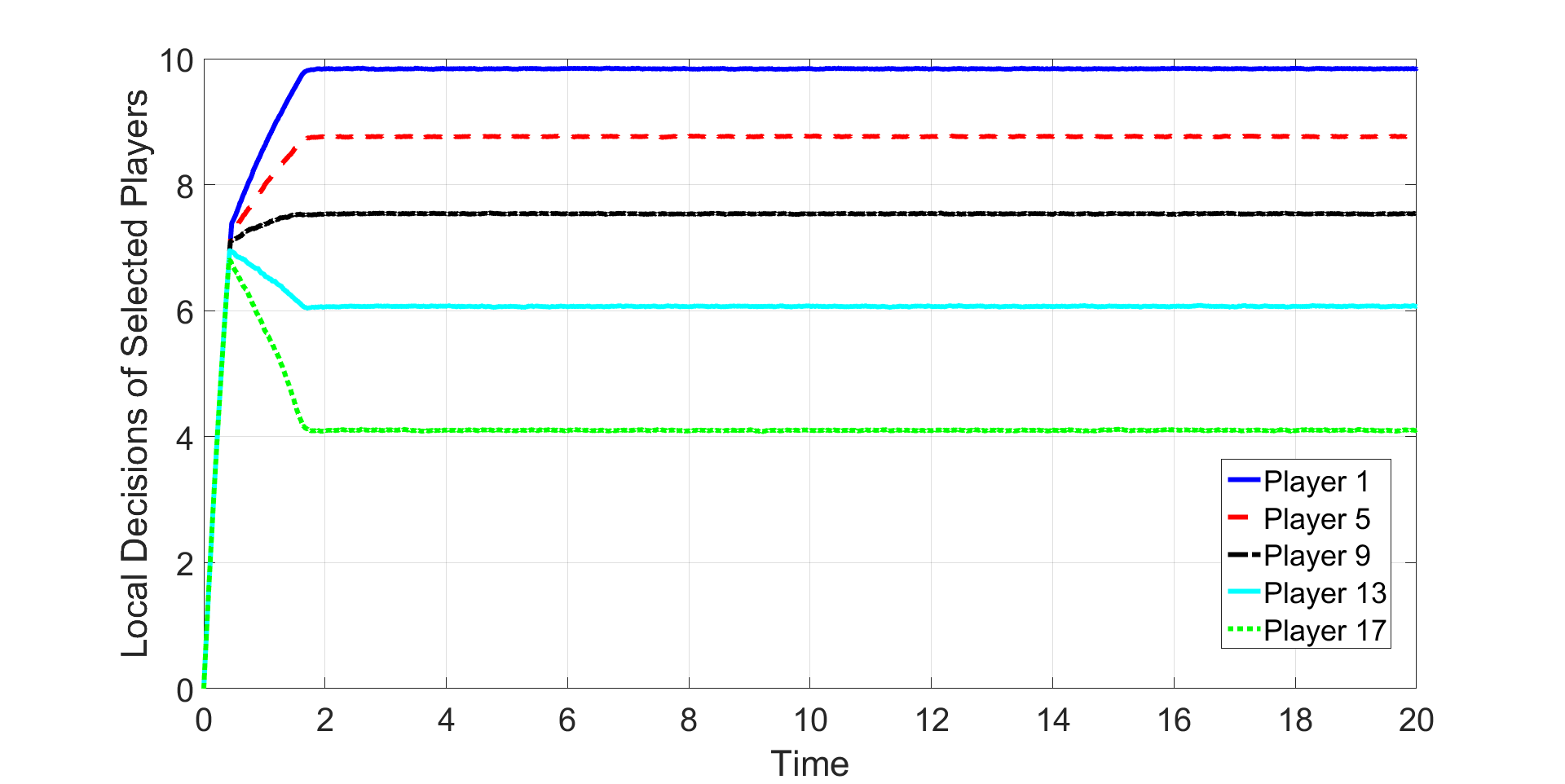}\\
  \includegraphics[width=8.1cm]{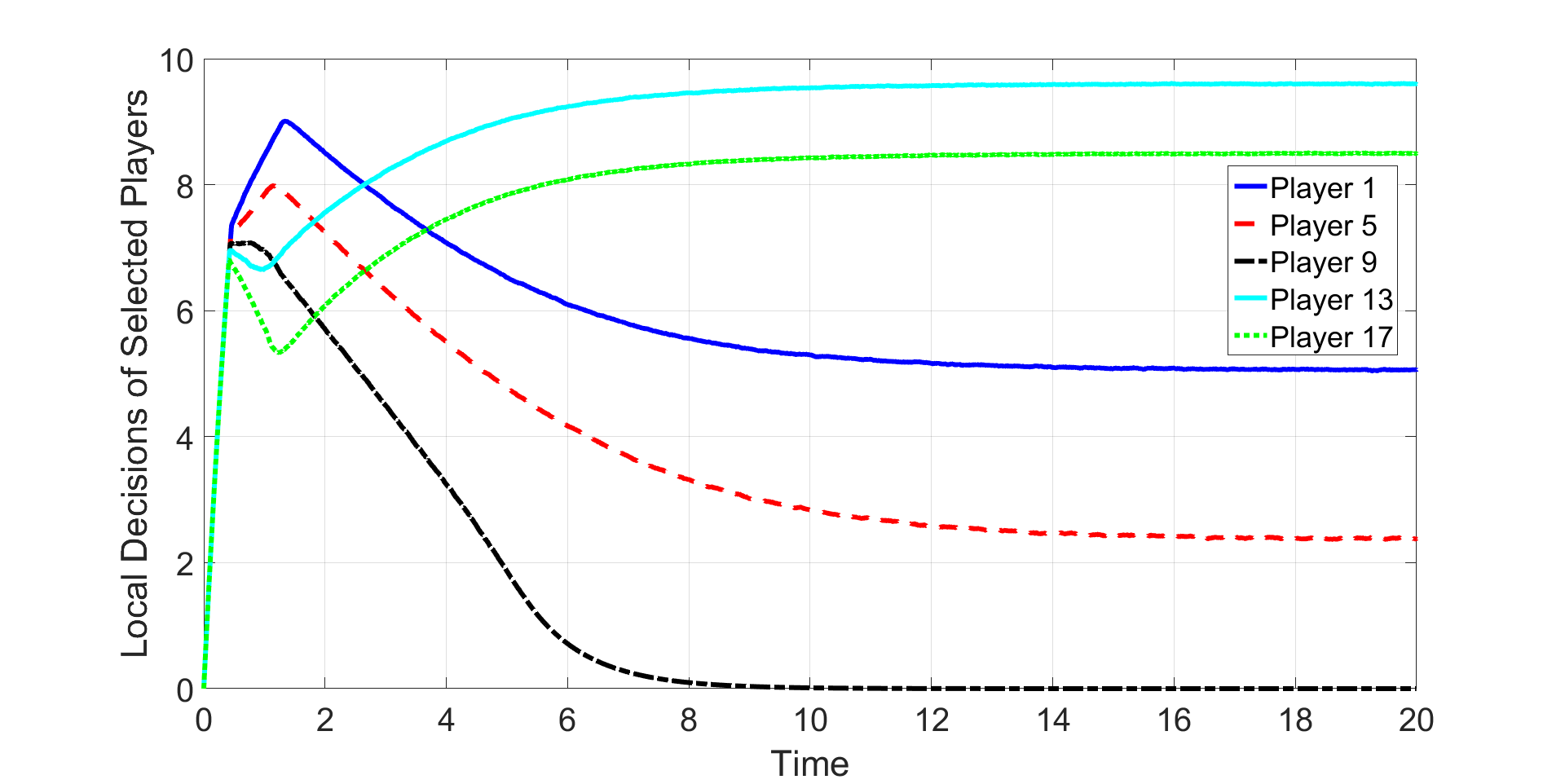}\\
  \caption{The trajectories of the strategy profile of Nash-Cournot game without (the upper one) and with (the lower one) the linear coupled constraint.}\label{gcds1}
\end{figure}

\subsection{Demand Response Management}
Consider $N$ electricity users with the demand of energy consumption.
For each $i\in \mathcal{V}$, $x_i \in [\underline{r}_i,\bar{r}_i]$
is the energy consumption of the $i$th user and $C_i(x_i,\sigma(x))$
is the cost function in the following form
\begin{equation*}
C_i(x_i,\sigma(x)) = k_i (x_i-\chi_i)^2 + P(\sigma(x))x_i\text{,}
\end{equation*}
where $k_i$ is constant and $\chi_i$ is the nominal value of energy
consumption for $i=1,...,N$, with $P(\sigma(x)) = aN\sigma(x)+p_0$
and $\sigma(x) = \frac{1}{N}\sum_{i\in\mathcal{V}}x_i$. We adopt the
same numerical setting as given in \cite{Ye2017Game} as follows:
\begin{enumerate}[(i)]
\itemsep = 0pt \parskip = 0pt
\item $N = 5$, $k_i = 1$, $a = 0.04$, $p_0 = 5$.
\item $\chi_1 = 50, [\underline{r}_1,\bar{r}_1] = [45,55]; \quad \chi_2 = 55, [\underline{r}_2,\bar{r}_2] = [44,66]; \quad \chi_3 = 60, [\underline{r}_3,\bar{r}_3] = [46,72]; \quad \chi_4 = 65, [\underline{r}_4,\bar{r}_4] = [52,78]; \quad \chi_5 = 70, [\underline{r}_5,\bar{r}_5] = [56,84]$.
\end{enumerate}
The NE of the game has been calculated in \cite{Ye2017Game} as
$x^* = [45,46.4,51.3,56.2,61.1]^T$. Note that the total energy
consumption at this NE may be too far from the normal value because
$\sum_{i=1}^5 x_i^* = \sum_{i=1}^5 \chi_i -40$. Here, we further
impose the following linear constraint $\sum_{i=1}^5 x_i =
\sum_{i=1}^5\chi_i - 25$. The communication graph is randomly
generated and the parameters in
our algorithm are given as $\alpha = 30, \beta =100$, and $\gamma = 2$. Then the GNE is obtained as $x^* =
[45.2,50.1,55,59.9,64.8]^T$. Figure \ref{gcds3} shows the convergence to the NE (the upper one) and GNE (the lower one), which again illustrates our algorithm.

\begin{figure}
  \centering
  \includegraphics[width=8.1cm]{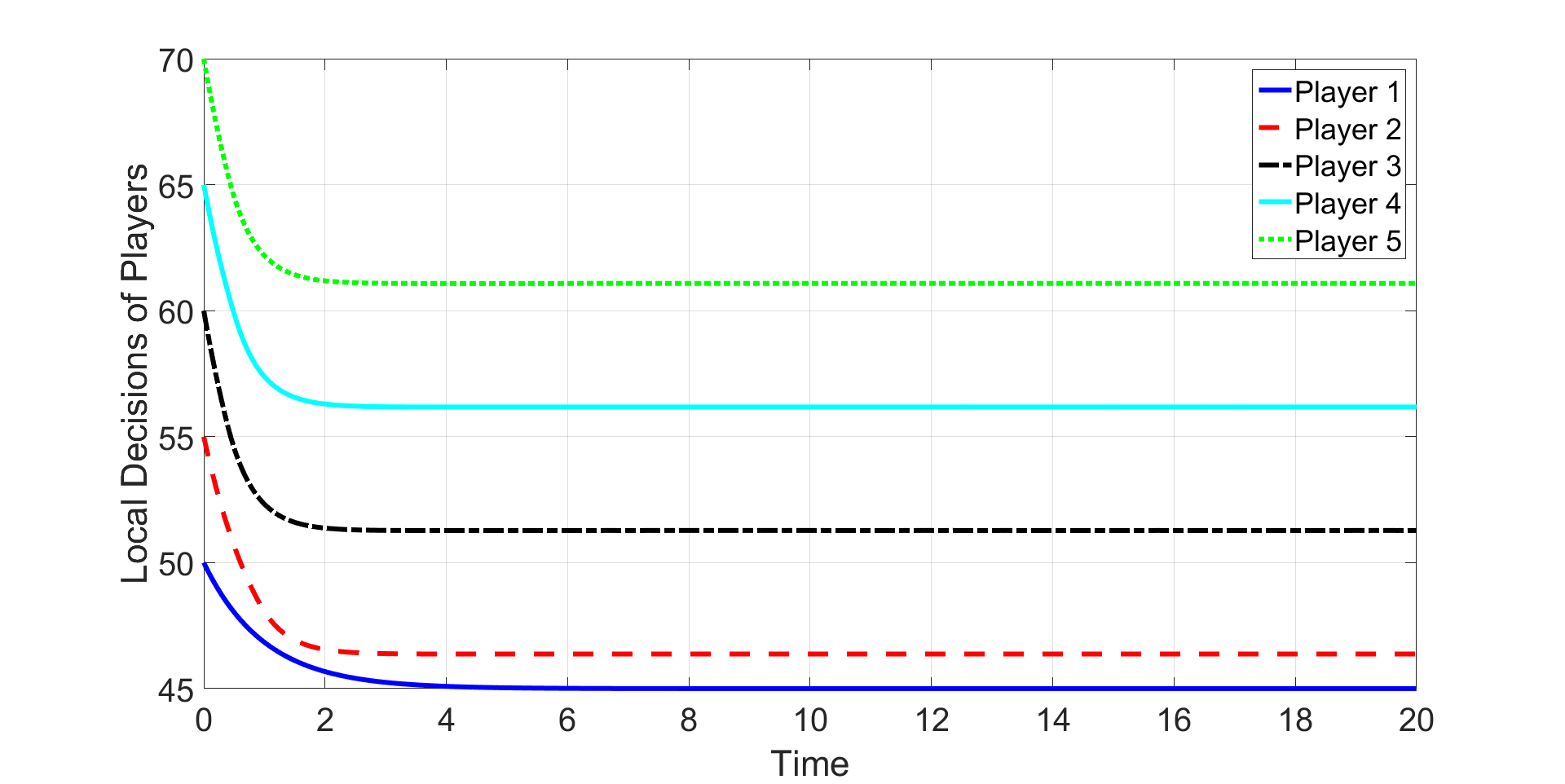}\\
  \includegraphics[width=8.1cm]{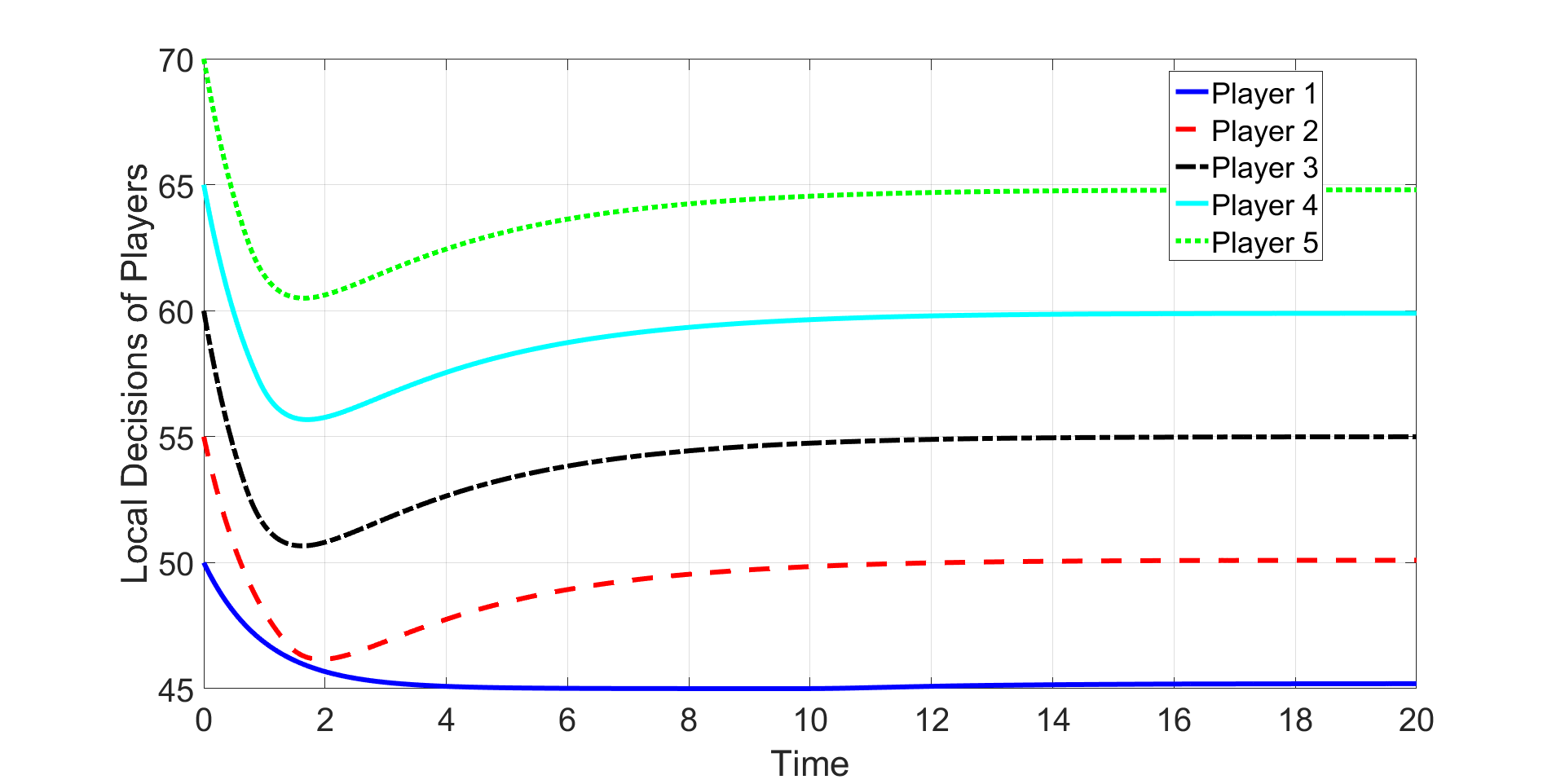}\\
  \caption{The trajectories of the strategy profile of demand response management game without (the upper one) and with (the lower one) the linear coupled constraint.}\label{gcds3}
\end{figure}

\section{Conclusions}
In this paper, aggregative games with linear coupled constraints
were considered and a distributed continuous-time projection-based
algorithm was proposed for the GNE seeking. The correctness and
convergence of the proposed non-smooth algorithm were proved by virtue of variational
inequalities and Lyapunov functions, and moreover, two numerical
examples were given for illustration.

\bibliographystyle{dcu}        
\bibliography{E:/HongLab/bib/refference0,E:/HongLab/bib/refference1}
\end{document}